\renewcommand{\algocf@captiontext}[2]{#1\algocf@typo. \AlCapFnt{}#2}
\def\@algocf@capt@plain{top}
\renewcommand{\algocf@makecaption}[2]{%
  \addtolength{\hsize}{\algomargin}%
  \sbox\@tempboxa{\algocf@captiontext{#1}{#2}}%
  \ifdim\wd\@tempboxa >\hsize
    \hskip .5\algomargin%
    \parbox[t]{\hsize}{\algocf@captiontext{#1}{#2}}%
  \else
    \global\@minipagefalse%
    \hbox to\hsize{\box\@tempboxa}%
  \fi
  \addtolength{\hsize}{-\algomargin}%
}
\newcommand{\R}{\mathbb{R}}
\newcommand{\T}{\mathrm{\scriptscriptstyle T}}
\newcommand{\Id}{\mathrm{Id}}
\newcommand{\tr}{\operatorname{tr}}
\newcommand{\Tan}{\operatorname{Tan}}
\DeclareMathOperator{\dist}{dist}
\theoremstyle{plain}
\newtheorem{theorem}{Theorem}
\newtheorem{lemma}{Lemma}
\newtheorem{corollary}{Corollary}
\newtheorem{proposition}{Proposition}
\newtheorem{definition}{Definition}
\theoremstyle{remark}
\newtheorem{remark}{Remark}
\title{Gradient flow for finding E-optimal designs}
\author[1]{J. Shi}
\author[2]{K.-C. Toh}
\author[3]{X. T. Tong}
\author[4]{W. K. Wong}
\affil[1]{Department of Statistics and Data Science, National University of Singapore, Singapore 117546, Singapore\\\texttt{shijieling@u.nus.edu}}
\affil[2]{Department of Mathematics, National University of Singapore, Singapore 119076, Singapore\\\texttt{mattohkc@nus.edu.sg}}
\affil[3]{Department of Mathematics, National University of Singapore, Singapore 119076, Singapore\\\texttt{xin.t.tong@nus.edu.sg}}
\affil[4]{Department of Biostatistics, Fielding School of Public Health, University of California, Los Angeles, CA 90095, U.S.A.\\\texttt{wkwong@ucla.edu}}
\date{}
\begin{document}

\maketitle

\begin{abstract}
The $E$-optimality criterion for a regression model maximizes the smallest
eigenvalue of the information matrix and becomes non-differentiable when
this eigenvalue has multiplicity greater than one. Working in the
$2$-Wasserstein space, we show that the Wasserstein gradient at an empirical
measure coincides, up to a constant factor, with the Euclidean particle
gradient for smooth criteria such as $D$- and $L$-optimality, and that the
approximation gap for equal-weight $N$-particle designs vanishes at an
explicit rate. The main challenge is the nonsmooth $E$-criterion, for which
the Wasserstein gradient does not exist. We replace it with a constrained
Wasserstein steepest-ascent field obtained by maximizing feasible directional
derivatives over the tangent cone of the design space, and prove that the
resulting flow satisfies an exact energy identity and that every limit point
is first-order stationary. The particle ascent computation reduces to a
convex semidefinite programme whose dimension equals the multiplicity of the
smallest eigenvalue. In numerical comparisons on second-order response
surface models and a seven-dimensional logistic regression model, the
constrained Wasserstein steepest-ascent method attains near-optimal
$E$-criterion values and is markedly more reliable than particle swarm
optimization in higher-dimensional settings. The framework applies more
broadly to other nonsmooth minimax criteria in optimal design, and a
numerical experiment on the minimax-single-parameter criterion confirms that
the method attains the theoretical optimum.
\end{abstract}

\noindent\textbf{Keywords:} Constrained optimization; Experimental design; Optimal transport; Particle approximation; Regression models; Wasserstein gradient flow.

\section{Introduction}
Research in optimal experimental design dates back to \citet{smith1918}
and continues unabated because of rising experimental costs, model
complexity and more intricate design criteria that seek to capture the
goal of the experiment more accurately
\citep{atkinson1996usefulness,bailey,smucker2018optimal}. Given a design
criterion and a statistical model on a design space, the scientific
objective is to find a design that optimizes the criterion over all possible designs.  Consequently, 
optimal designs provide the most precise statistical inference at
minimum cost and are now used across disciplines \citep{berger}. Design
ideas are also increasingly used in emerging areas for statistical
inference, for example in the construction of optimal subsampling
strategies from massive datasets
\citep{hyjasa,johns,ortho,jesus,jmlr}.

Following convention, the worth of a design is measured by the information matrix constructed from the design and the regression model. Approximate design theory, pioneered by Jack Kiefer and documented extensively in his collected works \citep{kiefer85}, treats a design as a probability measure on a compact design space, and shows that when the criterion is a convex or concave functional on the space of information matrices, there is a unified theory for finding many types of optimal approximate designs in linear models, together with algorithms whose convergence to an optimum can be established. Design monographs, such as \citet{fedorov1972}, \citet{pazman}, \citet{pukel2008optimal}, and \citet{berger2009introduction} contain technical details and illustrations. When the model is nonlinear or the criterion is nonsmooth, as with minimax-type criteria, the analytical problem is substantially harder and computational difficulties arise. This paper develops a unified framework, based on the geometry of the $2$-Wasserstein space, for computing optimal designs under both smooth and nonsmooth criteria, with particular attention to the $E$-optimality criterion and more general nonsmooth minimax criteria, for which a convergence guarantee to a stationary design is established.

Let $\Omega\subset\R^d$ be a compact design space and let
$f:\Omega\to\R^m$ be continuously differentiable. For a linear model
with mean response $E(y\mid x)=f(x)^\T\theta$, the vector $f$ is the
regression vector. For a nonlinear model with mean response
$\mu(x,\theta)$, we take $f(x)=\partial\mu(x,\theta)/\partial\theta$
evaluated at a nominal parameter value $\theta^\ast$, in which case the
designs constructed below are locally optimal at $\theta^\ast$. For a
design $\rho\in\mathcal{P}_2(\Omega)$, the set of probability measures
on $\Omega$ with finite second moment, the information matrix is
proportional to
\[
M_{\rho}=\int_{\Omega}f(x)f(x)^\T\,\rho(dx).
\]
The $E$-optimal design problem is
\begin{equation} \label{eq:eoptimal}
\max_{\rho \in \mathcal{P}_2(\Omega)} \mathcal{F}_E(\rho), \qquad \mathcal{F}_E(\rho)=\lambda_{\min}(M_{\rho}),
\end{equation}
where $\lambda_{\min}$ denotes the smallest eigenvalue. We also consider the smooth criteria
\begin{equation} \label{eq:othercriteria}
\mathcal{F}_L(\rho)=\tr(LM_{\rho}^{-1}), \qquad \mathcal{F}_D(\rho)=\log \det(M_{\rho}),
\end{equation}
with $L$ fixed and positive semidefinite. Here $D$-optimality corresponds to maximizing $\mathcal{F}_D$, whereas $L$-optimality corresponds to minimizing $\mathcal{F}_L$.

 Among these, $D$-optimality is the most widely studied: the criterion is smooth and the resulting designs admit relatively tractable analytical characterizations. Analytical derivations, however, tend to be model-specific and do not extend readily even to minor perturbations of the model; see \citet{dette1994optimal} for linear and nonlinear two-parameter models and \citet{mixcubnofull} and \citet{mixcubfull} for Scheff\'{e} mixture polynomials. The $E$-criterion is substantially harder because $\lambda_{\min}(M_\rho)$ is non-differentiable whenever the smallest eigenvalue has multiplicity greater than one \citep{eexponential,estandardized,dette_e-optimal_2014}. Under normality, an $E$-optimal design minimizes the length of the longest principal axis of the confidence ellipsoid for the model parameters, and in certain settings can yield more efficient parameter estimation than $D$-optimal designs \citep{eexponential}.

 Early analytical work on $E$-optimality focused on linear models. \citet{heiligers} and \citet{melas} derived $E$-optimal designs for polynomial regression over various intervals, \citet{jpmorgan} constructed $E$-optimal designs for three-group comparisons, and \citet{dette_e-optimal_2014} obtained $E$-optimal designs for second-order polynomial models with several factors. For nonlinear models, analytical results are available only in simple cases: \citet{dettewong} found $E$-optimal designs for the two-parameter Michaelis--Menten model, and \citet{estandardized} extended the maximin criterion to find standardized maximin $E$-optimal designs for the same model. Our interest in $E$-optimality is further motivated by its recent diverse applications, including clinical trials \citep{biophar,harman}, food science \citep{berkholz2000data,smets2002optimal,maths}, genetic studies \citep{kerr2012optimality}, robustified biological experiments \citep{jordan}, and sensor placement with non-supermodular objectives \citep{super}. 
 There is also mathematical interest in $E$-optimal designs for
structured linear models and in efficient algorithms for the
$E$-criterion: \citet{linalgebra} constructed $E$-optimal designs for
chemical balance weighing and two- and three-level factorial models
under correlated errors with known correlation structure,
\citet{geometry} gave a geometrical construction of $E$-optimal designs
in low dimensions, and \citet{monotonic} developed a monotonic
majorization--minimization algorithm with closed-form updates for the
$E$-optimal design problem. The $E$-criterion is further connected to
designs minimizing the condition number of the information matrix
\citep{condition2,condition1,yue2023constructing}.

Traditional algorithms for computing optimal designs include Fedorov--Wynn exchange algorithms \citep{wynn1970sequential,wynn1972results} and their variants \citep{dette_improving_2008,fedbrazil,mandal2015algorithmic}; see the monographs of \citet{fedorov1972}, \citet{pazman}, \citet{atkinson2007optimum} and \citet{berger2009introduction}. These methods may perform poorly when the criterion is nonsmooth, the model is nonlinear, or the dimension of the optimization problem is high \citep{larntz,royal}. \citet{tonyng} proposed algorithms with proofs of convergence for finding $A$- and $D$-optimal designs, but like all earlier proofs, they restricted attention to differentiable criteria and for linear models only. More broadly, nonsmooth minimax criteria arise in many design settings, including minimax $D$-optimal designs for nonlinear models \citep{king_minimax_2000,berger_minimax_2000}, minimax predicted variance designs under heteroscedasticity \citep{brown_algorithmic_2000,wong_multifactor_1994}, and minimax-single-parameter designs \citep{murty_minimax_1971,wong_unified_1992}. 

Nature-inspired metaheuristics, such as particle swarm optimization and
differential evolution, offer derivative-free alternatives and are
widely used across a broad range of optimization problems
\citep{whitacre2011recent,whitacre2011survival}. Recent applications span diverse fields, including the prediction of progressive lung regions at 6- to 12-month follow-up from single-time-point computed tomography scans in patients with idiopathic pulmonary fibrosis \citep{shiart}, the control and tracking of COVID-19 spread \citep{covid}, and the improvement of statistical estimation procedures \citep{parkshaowong}. In the design literature, they have been used to solve a variety of problems. \citet{wire} and \citet{zack1} provide tutorial reviews on the use of PSO and differential evolution for challenging design problems. Specific applications
include finding efficient computer experiments \citep{santer},
high-dimensional locally $D$-optimal logistic designs \citep{stella},
$G$-optimal designs for hierarchical linear models \citep{soft},
order-of-addition designs, which are drug-combination studies in which
the order of drug administration is itself a design variable
\citep{zack2}, and more practical early-phase clinical trial designs,
where optimal designs are found subject to realistic constraints
\citep{ct} or are multi-stage with control of both efficacy and
toxicity rates \citep{swarmjcgs}. However, metaheuristics lack
convergence guarantees and require proper tuning for solving more
complex optimization problems.

In this paper, we develop a framework based on the geometry of the $2$-Wasserstein space for optimal experimental design. For smooth criteria such as $\mathcal{F}_D$ and $\mathcal{F}_L$, the Wasserstein gradient at an empirical measure coincides, up to a constant factor, with the Euclidean particle gradient, and the approximation gap for equal-weight $N$-particle designs vanishes at an explicit rate (Section~\ref{sec:smooth}). Our focus is on the nonsmooth $E$-optimality design criterion, but the method applies to similar criteria. The main contributions are as follows.

\smallskip
\noindent
(i) When $\lambda_{\min}(M_\rho)$ has multiplicity greater than one, the
Wasserstein gradient of $\mathcal{F}_E$ does not exist. We introduce a
constrained Wasserstein steepest-ascent field that replaces it, constructed
by maximizing a feasible directional derivative over the tangent cone of
$\Omega$ (Section~\ref{sec:nonsmooth}). The variational representation
$\lambda_{\min}(M_\rho) = \min_{G \in \Delta} \tr(G M_\rho)$
endows $\mathcal{F}_E$ with a min-type structure. Our construction extends
the classical finite-dimensional steepest-descent framework for such
objectives \citep[Ch.~V, \S~8]{demyanov_introduction_1974} to the $2$-Wasserstein space,
where feasibility is enforced by projection onto the tangent cone of the
design space.

\smallskip
\noindent
(ii) We prove that the flow driven by this field satisfies an exact energy
identity and that every $W_2$-limit point of the trajectory is stationary.
To our knowledge, this is the first convergence result for a
Wasserstein-based method applied to $E$-optimal design.

\smallskip
\noindent
(iii) By exploiting the variational characterization of $\lambda_{\min}$ and
the structure of its minimizing eigenspace, we reduce the particle
steepest-ascent computation to a convex semidefinite programme whose
dimension equals the multiplicity of the smallest eigenvalue. Numerical
experiments (Section~\ref{sec:experiments}) on second-order response surface
models and a seven-dimensional logistic regression demonstrate that the
method attains near-optimal $E$-designs and is markedly more reliable than
particle swarm optimization in higher-dimensional settings.

\smallskip
\noindent
(iv) The constrained Wasserstein steepest-ascent framework applies more
broadly to other nonsmooth minimax criteria in optimal design. We
illustrate this with the minimax-single-parameter criterion
(Section~\ref{sec:scaled-param}), for which the direction subproblem
reduces to a convex programme over a probability simplex. A numerical
experiment (Section~\ref{sec:sp-experiment}) confirms that the method
attains the theoretical optimum for this criterion.

\medskip
\textit{Notation.}
The symbol $\|\cdot\|$ denotes the Euclidean norm on $\R^d$. For a
probability measure $\rho$ on $\Omega$, let $L^2(\rho;\R^d)$ denote the
Hilbert space of Borel vector fields $u:\Omega\to\R^d$, equipped with
\[
\|u\|_{\rho}^2=\int_{\Omega}\|u(x)\|^2\,\rho(dx),
\qquad
\langle u,v\rangle_{\rho}=\int_{\Omega}u(x)^\T v(x)\,\rho(dx).
\]
The symbol $C^1(\Omega;\R^m)$ denotes the continuously differentiable
functions from $\Omega$ to $\R^m$, $C_c^\infty(\R^d)$ the smooth
compactly supported scalar functions on $\R^d$, and
$\nabla f(x)\in\R^{m\times d}$ the Jacobian of $f$ with respect to $x$.
We write $\mathbb{S}^m$ for the space of $m\times m$ real symmetric
matrices and $\mathbb{S}_+^m$ for the cone of positive semidefinite
matrices in $\mathbb{S}^m$.

\section{Wasserstein gradients and particle approximation}
\label{sec:smooth}

\subsection{Wasserstein gradients for design criteria}
\label{sec:smooth-wassdiff}
The design criteria in \eqref{eq:eoptimal} and \eqref{eq:othercriteria}
are functionals on $\mathcal{P}_2(\Omega)$, the space of probability
measures on the design region with finite second moment. Equipping this
space with the $2$-Wasserstein metric provides a notion of Wasserstein
gradient that leads naturally to particle-based optimization. This
subsection recalls the relevant definitions and computes the Wasserstein
gradients of these criteria; subsequent subsections establish a
particle-gradient equivalence (Section~\ref{sec:smooth-flow}) and an
approximation rate for equal-weight particle designs
(Section~\ref{sec:particle-approx}).

Let $\mathcal{P}(\R^d)$ denote the Borel probability measures on $\R^d$, and let
\[
\mathcal{P}_2(\R^d)
:=
\left\{
\rho \in \mathcal{P}(\R^d):
\int_{\R^d}\|x\|^2\,\rho(dx)<\infty
\right\}.
\]
The subset $\mathcal{P}_2(\Omega)$ is defined analogously. Since $\Omega$ is
compact, $\mathcal{P}_2(\Omega)$ coincides with the set of all Borel
probability measures on $\Omega$.
For a Borel map $T:\R^d\to\R^d$, the pushforward $T_{\#}\rho$ is defined by $(T_{\#}\rho)(A)=\rho(T^{-1}(A))$ for Borel sets $A$. If $\Gamma(\rho,\rho')$ denotes the set of couplings of $\rho$ and $\rho'$, the $2$-Wasserstein distance is
\begin{equation}
\label{eq:wass_def_eq}
W_2(\rho,\rho')
:=
\left(
\min_{\gamma \in \Gamma(\rho,\rho')}
\int_{\R^d\times\R^d}\|x-y\|^2\,\gamma(dx,dy)
\right)^{1/2};
\end{equation}
see \citet[Section~7.1]{villani2009optimal}. The tangent space at $\rho \in \mathcal{P}_2(\R^d)$ is
\[
\Tan_{\rho}\mathcal{P}_2(\R^d)
:=
\overline{\{\nabla \varphi:\varphi\in C_c^\infty(\R^d)\}}^{\,L^2(\rho;\R^d)}.
\]

\begin{definition}[Wasserstein differentiability]
\label{def:Wassersteingrad}
Let $\mathcal{F}:\mathcal{P}_2(\R^d)\to\R$. A vector field $\nabla_{W_2}\mathcal{F}(\rho)\in \Tan_{\rho}\mathcal{P}_2(\R^d)$ is called the Wasserstein gradient of $\mathcal{F}$ at $\rho$ if, for every $\rho' \in \mathcal{P}_2(\R^d)$ and every coupling $\gamma \in \Gamma(\rho,\rho')$ attaining the minimum in \eqref{eq:wass_def_eq},
\[
\mathcal{F}(\rho')-\mathcal{F}(\rho)
=
\int_{\R^d\times\R^d}
\left\langle \nabla_{W_2}\mathcal{F}(\rho)(x),\,y-x\right\rangle
\gamma(dx,dy)
+o\!\left(W_2(\rho,\rho')\right)
\]
as $\rho' \to \rho$ in $W_2$.
\end{definition}

For the smooth criteria in \eqref{eq:othercriteria}, direct differentiation gives
\begin{align*}
\nabla_{W_2}\mathcal{F}_L(\rho)(x)
&=
-2\nabla f(x)^\T M_{\rho}^{-1}LM_{\rho}^{-1}f(x),
\\
\nabla_{W_2}\mathcal{F}_D(\rho)(x)
&=
2\nabla f(x)^\T M_{\rho}^{-1}f(x).
\end{align*}
If $M_{\rho}$ has a simple smallest eigenvalue with unit eigenvector $q_{\rho}$, then the $E$-criterion is differentiable at $\rho$ and
\begin{equation}
\label{eq:E-simple-gradient}
\nabla_{W_2}\mathcal{F}_E(\rho)(x)
=
2\bigl(q_{\rho}^\T f(x)\bigr)\nabla f(x)^\T q_{\rho}.
\end{equation}
When $\lambda_{\min}(M_\rho)$ has multiplicity greater than one, $\mathcal{F}_E$ is not Wasserstein differentiable at $\rho$ and the gradient formulas above do not apply. A different framework, based on directional derivatives and the tangent cone of $\Omega$, is developed in Section~\ref{sec:nonsmooth}.

\subsection{Particle Wasserstein gradient flow}
\label{sec:smooth-flow}
Let $x=(x_1,\ldots,x_N)\in(\R^d)^N$ and let
\[
\rho_N
:=
\frac{1}{N}\sum_{i=1}^N \delta_{x_i},
\qquad
\mathcal{F}_N(x):=\mathcal{F}(\rho_N).
\]
For empirical measures, the Wasserstein gradient and the Euclidean gradient of $\mathcal{F}_N$ agree at the particle locations up to the factor $1/N$. This is the structural reason why particle methods provide a faithful discretization of the Wasserstein flow.

\begin{proposition}[Particle-gradient equivalence]
\label{prop:particle_gradient}
Assume that the particle locations $x_1,\ldots,x_N$ are pairwise distinct. Let $\rho_N=N^{-1}\sum_{i=1}^N\delta_{x_i}$ and suppose that $\mathcal{F}$ is Wasserstein differentiable at $\rho_N$ and that $\mathcal{F}_N$ is differentiable at $x$. Then
\begin{equation}
\label{eq:particle-gradient}
\nabla_{x_i}\mathcal{F}_N(x)
=
\frac{1}{N}\nabla_{W_2}\mathcal{F}(\rho_N)(x_i),
\qquad
i=1,\ldots,N.
\end{equation}
\end{proposition}

Recall that, in the standard sense of \citet[Section~11.1.1]{ambrosio_gradient_2008}, a Wasserstein gradient flow of a functional $\mathcal{F}$ is a curve $(\rho_t)_{t\ge 0}$ satisfying
\[
\partial_t\rho_t + \nabla\cdot(\rho_t v_t) = 0,
\qquad
v_t = -\nabla_{W_2}\mathcal{F}(\rho_t),
\]
corresponding to minimization of $\mathcal{F}$. The following corollary is stated under this convention; the maximization case follows by applying the same result to $-\mathcal{F}$.

\begin{corollary}[Empirical flows]
\label{cor:particle_lift}
Under the assumptions of Proposition~\ref{prop:particle_gradient}, suppose that $(\rho_t)_{t\ge 0}$ is a Wasserstein gradient flow of $\mathcal{F}$ with $\rho_0=\rho_N$. Then $\rho_t$ remains empirical:
\[
\rho_t=\frac{1}{N}\sum_{i=1}^N\delta_{x_i(t)},
\]
and the particle positions satisfy the time-rescaled Euclidean gradient flow
\[
\frac{d}{dt}x_i(t)
=
-N\nabla_{x_i}\mathcal{F}_N(x(t)),
\qquad
i=1,\ldots,N.
\]
\end{corollary}

\begin{proof}
The claim follows by substituting \eqref{eq:particle-gradient} into the continuity equation.
\end{proof}

\subsection{Optimal-value approximation by equal-weight particle designs}
\label{sec:particle-approx}
For $N\ge 1$, let
\[
\mathcal{P}_N(\Omega)
:=
\left\{
\frac{1}{N}\sum_{i=1}^N \delta_{x_i}: x_i\in\Omega
\right\}.
\]
The next proposition shows that, under a local $W_2$-Lipschitz condition at an optimizer, restricting the design search to equal-weight $N$-particle measures incurs only an explicit gap in the optimal value. This provides a basic justification for the particle approximation used in the paper. Although the statement is written for minimization, the corresponding maximization bound follows immediately by applying it to $-\mathcal{F}$.

\begin{proposition}[Optimal-value approximation]
\label{prop:particle_value}
Assume that $\Omega$ is compact. Let $\mathcal{F}:\mathcal{P}_2(\Omega)\to\R$ and let $\rho^\ast$ minimizes $\mathcal{F}$ over $\mathcal{P}_2(\Omega)$. Suppose that $\mathcal{F}$ is locally $W_2$-Lipschitz at $\rho^\ast$,
in the sense that there exist $L,\delta>0$ such that
$|\mathcal{F}(\rho)-\mathcal{F}(\rho^\ast)|
\le L\,W_2(\rho,\rho^\ast)$
whenever $W_2(\rho,\rho^\ast)\le \delta$.
Then there exists $C>0$ such that, for all sufficiently large $N$,
\begin{equation}
\label{eq:particle-value-rate}
0
\le
\inf_{\rho_N\in \mathcal{P}_N(\Omega)} \mathcal{F}(\rho_N)
-\inf_{\rho\in \mathcal{P}_2(\Omega)} \mathcal{F}(\rho)
\le
C\,r_N,
\end{equation}
where
\[
r_N
:=
\begin{cases}
N^{-1/4}, & d<4,\\[0.3ex]
N^{-1/4}(\log N)^{1/2}, & d=4,\\[0.3ex]
N^{-1/d}, & d>4.
\end{cases}
\]
\end{proposition}

A proof is given in the appendix.

\begin{remark}
\label{rem:section2-transition}
The differentiability-based results in
Sections~\ref{sec:smooth-wassdiff}--\ref{sec:smooth-flow} apply to
Wasserstein-differentiable criteria, including $\mathcal{F}_L$ and
$\mathcal{F}_D$, and to $\mathcal{F}_E$ only when $\lambda_{\min}(M_\rho)$
is simple. Proposition~\ref{prop:particle_value}, by contrast, requires only
local $W_2$-Lipschitz continuity at an optimizer. When the smallest eigenvalue
has multiplicity greater than one, the Wasserstein gradient does not exist and
a different approach is needed; this is developed in
Section~\ref{sec:nonsmooth}.
\end{remark}

\section[Constrained Wasserstein steepest-ascent flow for the E-optimal design problem]{Constrained Wasserstein steepest-ascent flow for the $E$-optimal design problem}
\label{sec:nonsmooth}

\subsection[Directional derivative of the E-criterion]{Directional derivative of the $E$-criterion}
\label{sec:directional-derivative}
The Wasserstein gradient flow of Section~\ref{sec:smooth} relies on the Wasserstein
differentiability of the design criterion. When $\lambda_{\min}(M_\rho)$
has multiplicity greater than one, the $E$-criterion $\mathcal{F}_E$ is no
longer Wasserstein differentiable and a different approach is needed.
Following the classical steepest-descent framework for nonsmooth min-type
objectives in finite dimensions \citep[Ch.~V, \S~8]{demyanov_introduction_1974}, we
define a directional derivative $D\mathcal{F}_E(\rho)[u]$ for feasible
velocity fields $u$, select the direction of steepest ascent by maximizing
over all feasible directions of unit norm, and use the resulting field to
drive a continuous-time flow. This subsection derives the directional derivative; subsequent subsections
construct the constrained Wasserstein steepest-ascent field and define
stationarity (Section~\ref{sec:steepest-ascent}), establish an energy
identity and limit-point stationarity for the associated flow
(Section~\ref{sec:flow}), and reduce the particle-level direction
computation to a semidefinite programme
(Section~\ref{sec:particle-computation}).

Throughout this section, $\Omega\subset\R^d$ is nonempty, compact, and
convex. The goal is to obtain an explicit formula for the time derivative of
$\mathcal{F}_E$ along feasible flows, which will define the directional
derivative $D\mathcal{F}_E(\rho)[u]$.

For $x\in\Omega$, let
\[
T_{\Omega}(x):=\overline{\{\alpha(y-x):\alpha\ge 0,\ y\in\Omega\}}
\]
denote the tangent cone of $\Omega$ at $x$, the closure of all directions
pointing from $x$ into $\Omega$. A standard variational characterization
\citep[Section~4.2]{HornJohnson2012} gives, for every symmetric matrix $M$,
\begin{equation}
\label{eq:lambda-min-trace}
\lambda_{\min}(M)=\min_{G\in\Delta}\tr(GM),
\qquad
\Delta:=\{G\in\mathbb{S}_+^m:\tr(G)=1\}.
\end{equation}
We write
\[
\mathcal{G}(M):=\arg\min_{G\in\Delta}\tr(GM)
\]
for the set of minimizers in \eqref{eq:lambda-min-trace}. The next
proposition is a direct consequence of \eqref{eq:lambda-min-trace} and the
spectral theorem; see \citet[Section~4.1]{HornJohnson2012}.

\begin{proposition}[Structure of the minimizer set]
\label{prop:G-structure}
Let $M\in\mathbb{S}^m$, and let $V\in\R^{m\times s}$ have orthonormal
columns spanning the eigenspace of $M$ associated with $\lambda_{\min}(M)$.
Then
\[
\mathcal{G}(M)=\{VSV^\T:S\in\mathbb{S}_+^{s},\ \tr(S)=1\}.
\]
In particular, if $\lambda_{\min}(M)$ is simple with unit eigenvector $q$,
then $\mathcal{G}(M)=\{qq^\T\}$.
\end{proposition}

For $G\in\Delta$, define the vector field
\[
a_G(x):=2\nabla f(x)^\T G f(x),
\qquad x\in\Omega.
\]
For $\rho\in\mathcal{P}_2(\Omega)$ and any velocity field
$u\in L^2(\rho;\R^d)$ satisfying $u(x)\in T_{\Omega}(x)$ for $\rho$-almost
every $x$, so that the induced motion keeps the support within $\Omega$,
define
\begin{equation}
\label{eq:directional-derivative}
D\mathcal{F}_E(\rho)[u]
:=
\min_{G\in \mathcal{G}(M_{\rho})}\langle a_G,u\rangle_{\rho}.
\end{equation}
\begin{proposition}[Chain rule]
\label{prop:directional-derivative}
Let $(\rho_t)_{|t|<\varepsilon}\subset\mathcal{P}_2(\Omega)$ be an
absolutely continuous curve, and let $u_t\in L^2(\rho_t;\R^d)$ be a
velocity field with $u_t(x)\in T_\Omega(x)$ for $\rho_t$-almost every
$x$ and almost every $t$, satisfying
$\partial_t\rho_t+\nabla\!\cdot(\rho_tu_t)=0$ distributionally on
$(-\varepsilon,\varepsilon)\times\Omega$. Then
$t\mapsto\mathcal{F}_E(\rho_t)$ is absolutely continuous and, for
almost every $t\in(-\varepsilon,\varepsilon)$,
\[
\frac{d}{dt}\mathcal{F}_E(\rho_t)
=
D\mathcal{F}_E(\rho_t)[u_t].
\]
\end{proposition}

In particular, the time derivative of $\mathcal{F}_E$ along any feasible
flow is determined for almost every $t$ solely by the pair
$(\rho_t, u_t)$ through $D\mathcal{F}_E(\rho_t)[u_t]$. We therefore refer
to $D\mathcal{F}_E(\rho)[u]$ as the directional derivative of
$\mathcal{F}_E$ at $\rho$ along the feasible direction $u$. A proof is
given in the appendix.

\begin{remark}[Simple-eigenvalue case]
\label{rem:simple-eigenvalue-directional}
If $M_{\rho}$ has a simple smallest eigenvalue with unit eigenvector
$q_{\rho}$, then $\mathcal{G}(M_{\rho})=\{q_{\rho}q_{\rho}^\T\}$ and
\[
D\mathcal{F}_E(\rho)[u]
=
\left\langle 2\nabla f(\cdot)^\T q_{\rho}q_{\rho}^\T
f(\cdot),u\right\rangle_{\rho}.
\]
Hence the directional derivative reduces to pairing with the smooth
Wasserstein gradient in \eqref{eq:E-simple-gradient}.
\end{remark}

\subsection{Constrained Wasserstein steepest-ascent field and stationarity}
\label{sec:steepest-ascent}
With the directional derivative in hand, we now construct a constrained
Wasserstein steepest-ascent field to replace the unavailable Wasserstein
gradient. The idea is to select, among all feasible directions, the one
that increases $\mathcal{F}_E$ most rapidly. Proofs of the results stated
in this subsection are given in the appendix.

To formalize this, let the feasible unit ball at
$\rho\in\mathcal{P}_2(\Omega)$ be
\[
K_{\rho}
:=
\left\{
u\in L^2(\rho;\R^d):
\|u\|_{\rho}\le 1,\;
u(x)\in T_{\Omega}(x)
\right\},
\]
where the constraint $u(x)\in T_{\Omega}(x)$ is required for
$\rho$-almost every $x$. Define
\[
m_{\Omega}(\rho)
:=
\sup_{u\in K_{\rho}}D\mathcal{F}_E(\rho)[u],
\]
the maximum rate at which $\mathcal{F}_E$ can increase over feasible
directions at $\rho$.

\begin{definition}[Constrained Wasserstein steepest-ascent direction and
field]
\label{def:steepest-ascent}
A constrained Wasserstein steepest-ascent direction at $\rho$ is any
maximizer
\[
u_{\rho}^{\star}
\in
\arg\max_{u\in K_{\rho}}D\mathcal{F}_E(\rho)[u].
\]
The associated constrained Wasserstein steepest-ascent field is
\[
\bar{\nabla}_{W_2}\mathcal{F}_E(\rho)
:=
m_{\Omega}(\rho)\,u_{\rho}^{\star}.
\]
\end{definition}

Computing $u_\rho^\star$ directly from
Definition~\ref{def:steepest-ascent} requires maximizing over the
infinite-dimensional ball $K_\rho$, where the objective
$D\mathcal{F}_E(\rho)[u] = \min_{G\in\mathcal{G}(M_\rho)}
\langle a_G, u\rangle_\rho$ is itself a minimum over
$\mathcal{G}(M_\rho)$. The next proposition shows that the steepest-ascent
field can be obtained by first solving a finite-dimensional minimization
over $\mathcal{G}(M_\rho)$ and then projecting onto the tangent cone.

\begin{proposition}[Construction of the steepest-ascent field]
\label{prop:gap-representation}
For every $\rho\in\mathcal{P}_2(\Omega)$, there exists a minimizer
\begin{equation}
\label{eq:Gstar-def}
G_{\rho}^{\star}
\in
\arg\min_{G\in \mathcal{G}(M_{\rho})}
\left\|
\Pi_{T_{\Omega}(\cdot)}\bigl(a_G(\cdot)\bigr)
\right\|_{\rho},
\end{equation}
and the constrained Wasserstein steepest-ascent field is \begin{equation}
\label{eq:steepest-field-explicit}
\bar{\nabla}_{W_2}\mathcal{F}_E(\rho)
=
\Pi_{T_{\Omega}(\cdot)}\bigl(a_{G_{\rho}^{\star}}(\cdot)\bigr).
\end{equation}
\end{proposition}

The representation \eqref{eq:steepest-field-explicit} shows that the
steepest-ascent field is the projection of $a_{G_\rho^\star}$ onto the
tangent cone of $\Omega$, exactly as a projected gradient.

\begin{remark}[Simple-eigenvalue case]
\label{rem:simple-eigenvalue}
If $M_{\rho}$ has a simple smallest eigenvalue with unit eigenvector
$q_{\rho}$, then $\mathcal{G}(M_\rho)=\{q_\rho q_\rho^\T\}$ and
\[
\bar{\nabla}_{W_2}\mathcal{F}_E(\rho)
=
\Pi_{T_{\Omega}(\cdot)}
\bigl(
2\nabla f(\cdot)^\T q_{\rho}q_{\rho}^\T f(\cdot)
\bigr),
\]
so the constrained Wasserstein steepest-ascent field reduces to the
projection of the smooth Wasserstein gradient onto the tangent cone of
$\Omega$.
\end{remark}

The Wasserstein steepest-ascent field constructed above will drive the continuous-time
flow studied in Section~\ref{sec:flow}. To state the convergence guarantee for that
flow, we need a first-order optimality condition for the constrained
$E$-optimal design problem: a measure $\rho$ should be declared stationary
when $\bar{\nabla}_{W_2}\mathcal{F}_E(\rho)=0$, i.e., when no feasible
direction can increase $\mathcal{F}_E$.

\begin{definition}[Stationary point]
\label{def:stationary}
A measure $\rho\in\mathcal{P}_2(\Omega)$ is stationary for the constrained
$E$-optimal design problem if
\[
D\mathcal{F}_E(\rho)[w]\le 0
\]
for every feasible direction $w\in L^2(\rho;\R^d)$ such that
$w(x)\in T_{\Omega}(x)$ for $\rho$-almost every $x$.
\end{definition}

\begin{proposition}[Stationarity characterized by the steepest-ascent field]
\label{prop:gap-stationary}
For every $\rho\in\mathcal{P}_2(\Omega)$,
$\bar{\nabla}_{W_2}\mathcal{F}_E(\rho)=0$ if and only if $\rho$ is
stationary in the sense of Definition~\ref{def:stationary}.
\end{proposition}

\begin{remark}
\label{rem:stationarity-measure}
Since $\bar{\nabla}_{W_2}\mathcal{F}_E(\rho) = m_\Omega(\rho)\,u_\rho^\star$,
the condition $\bar{\nabla}_{W_2}\mathcal{F}_E(\rho)=0$ is equivalent to
$m_\Omega(\rho)=0$. We refer to $m_\Omega(\rho)$ as the stationarity
measure. Being a nonnegative scalar, it is well suited both as the quantity
controlling the convergence analysis of the flow in Section~\ref{sec:flow} and as a
stopping criterion in the particle algorithm  of Section~\ref{sec:particle-computation}.
\end{remark}

\subsection{Constrained Wasserstein steepest-ascent flow and limit-point
stationarity}
\label{sec:flow}

Section~\ref{sec:steepest-ascent} constructed the constrained Wasserstein
steepest-ascent field $\bar{\nabla}_{W_2}\mathcal{F}_E(\rho)$. We now use it
to drive a continuous-time flow and establish its asymptotic first-order
properties.

The constrained Wasserstein steepest-ascent flow is defined by
\begin{equation}
\label{eq:flow_bar}
\partial_t \rho_t+\nabla\!\cdot(\rho_t v_t)=0,
\qquad
v_t=\bar{\nabla}_{W_2}\mathcal{F}_E(\rho_t).
\end{equation}
If $(\rho_t)$ is absolutely continuous and satisfies \eqref{eq:flow_bar},
then Proposition~\ref{prop:directional-derivative} yields that
$t\mapsto \mathcal{F}_E(\rho_t)$ is absolutely continuous and
\[
\frac{d}{dt}\mathcal{F}_E(\rho_t)
=
D\mathcal{F}_E(\rho_t)[v_t]
\]
for almost every $t$. Since $v_t$ is chosen as the constrained Wasserstein
steepest-ascent field, this identity leads directly to the energy law below.

\begin{proposition}[Energy identity]
\label{prop:energy-identity-nonsmooth}
Let $(\rho_t)_{t\in[0,T]}\subset\mathcal{P}_2(\Omega)$ be an absolutely
continuous curve satisfying \eqref{eq:flow_bar}. Then, for almost every
$t\in[0,T]$,
\[
\frac{d}{dt}\mathcal{F}_E(\rho_t)
=
\|v_t\|_{\rho_t}^2.
\]
\end{proposition}

\begin{proof}
Since $v_t = \bar{\nabla}_{W_2}\mathcal{F}_E(\rho_t)$ by
\eqref{eq:flow_bar}, Proposition~\ref{prop:directional-derivative} gives
$\frac{d}{dt}\mathcal{F}_E(\rho_t) = D\mathcal{F}_E(\rho_t)[v_t]$ for
almost every $t\in[0,T]$. Fix any such $t$ and let $u^\star_{\rho_t}$ be as
in Definition~\ref{def:steepest-ascent}, so that
$v_t = m_{\Omega}(\rho_t)\,u^\star_{\rho_t}$. The maximizing property of
$u^\star_{\rho_t}$ gives
$D\mathcal{F}_E(\rho_t)[u^\star_{\rho_t}] = m_{\Omega}(\rho_t)$, and
positive homogeneity of \eqref{eq:directional-derivative} in the direction
variable yields
\[
\frac{d}{dt}\mathcal{F}_E(\rho_t)
=
D\mathcal{F}_E(\rho_t)[v_t]
=
m_{\Omega}(\rho_t)\,D\mathcal{F}_E(\rho_t)[u^\star_{\rho_t}]
=
m_{\Omega}(\rho_t)^2.
\]
It remains to show that $\|v_t\|_{\rho_t}^2 = m_\Omega(\rho_t)^2$. If
$m_\Omega(\rho_t) = 0$, then $v_t = 0$ and both sides vanish. If
$m_\Omega(\rho_t) > 0$, then $\|u^\star_{\rho_t}\|_{\rho_t} = 1$, since
otherwise $u^\star_{\rho_t}/\|u^\star_{\rho_t}\|_{\rho_t} \in K_{\rho_t}$
would yield a strictly larger value of $D\mathcal{F}_E(\rho_t)[u]$ by
positive homogeneity, contradicting maximality. Therefore
$\|v_t\|_{\rho_t}^2 = m_\Omega(\rho_t)^2\|u^\star_{\rho_t}\|_{\rho_t}^2
= m_\Omega(\rho_t)^2$.
\end{proof}

A further ingredient in the proof of
Theorem~\ref{thm:limit-stationary} is the lower semicontinuity of the
stationarity measure with respect to the $W_2$ topology. We record this
auxiliary fact here and defer its proof to the appendix.

\begin{proposition}[Lower semicontinuity of the stationarity measure]
\label{prop:gap-lsc}
The map $\rho\mapsto m_{\Omega}(\rho)$ is lower semicontinuous on
$\mathcal{P}_2(\Omega)$ with respect to the $W_2$ topology.
\end{proposition}

\begin{theorem}[Asymptotic first-order properties of the constrained flow]
\label{thm:limit-stationary}
Let $\Omega\subset\R^d$ be nonempty, compact, and convex, and let
$f\in C^1(\Omega;\R^m)$. Suppose that
$(\rho_t)_{t\ge 0}\subset\mathcal{P}_2(\Omega)$ is an absolutely continuous
curve satisfying \eqref{eq:flow_bar}. Then:

\noindent
(i) the dissipation is integrable, in the sense that
$\int_0^\infty \|v_t\|_{\rho_t}^2\,dt<\infty$;

\noindent
(ii) there exists a sequence $t_k\to\infty$ such that
$\|v_{t_k}\|_{\rho_{t_k}}\to 0$;

\noindent
(iii) every $W_2$-limit point of $(\rho_t)_{t\ge 0}$ is stationary in the
sense of Definition~\ref{def:stationary}.
\end{theorem}

\begin{proof}
For (i), Proposition~\ref{prop:energy-identity-nonsmooth} gives
\[
\mathcal{F}_E(\rho_T)-\mathcal{F}_E(\rho_0)
=
\int_0^T \|v_t\|_{\rho_t}^2\,dt,
\qquad T\ge 0.
\]
Since $\Omega$ is compact and $f$ is continuous,
$0 \le \mathcal{F}_E(\rho) \le \sup_{x\in\Omega}\|f(x)\|^2$ for every
$\rho\in\mathcal{P}_2(\Omega)$, so the left-hand side is bounded uniformly
in $T$. Letting $T\to\infty$ yields
$\int_0^\infty \|v_t\|_{\rho_t}^2\,dt<\infty$.

Part (ii) follows directly: if
$\liminf_{t\to\infty}\|v_t\|_{\rho_t}>0$, then the integrand is bounded
away from zero on a set of infinite measure, contradicting (i).

For (iii), let $\bar\rho$ be any $W_2$-limit point of
$(\rho_t)_{t\ge0}$, and choose $s_k\to\infty$ such that
$\rho_{s_k}\to\bar\rho$ in $W_2$. By (i),
$\int_{s_k}^{s_k+1}\|v_t\|_{\rho_t}^2\,dt\to0$, so one may select
$t_k\in[s_k,s_k+1]$ with
$\|v_{t_k}\|_{\rho_{t_k}}^2 \le
\int_{s_k}^{s_k+1}\|v_t\|_{\rho_t}^2\,dt$, giving
$\|v_{t_k}\|_{\rho_{t_k}}\to0$. The dynamical characterization of $W_2$
\citep[see, e.g.,][Section~8.3]{ambrosio_gradient_2008} and the bound
$t_k - s_k \le 1$ yield
\[
W_2^2(\rho_{s_k},\rho_{t_k})
\le
(t_k-s_k)\int_{s_k}^{t_k}\|v_t\|_{\rho_t}^2\,dt
\le
\int_{s_k}^{s_k+1}\|v_t\|_{\rho_t}^2\,dt\to0,
\]
so $\rho_{t_k}\to\bar\rho$ in $W_2$. Since
$m_\Omega(\rho_{t_k}) = \|v_{t_k}\|_{\rho_{t_k}} \to 0$,
Proposition~\ref{prop:gap-lsc} gives
$m_{\Omega}(\bar\rho) \le
\liminf_{k\to\infty}m_{\Omega}(\rho_{t_k}) = 0$. Hence
$\bar{\nabla}_{W_2}\mathcal{F}_E(\bar\rho) = 0$, and
Proposition~\ref{prop:gap-stationary} implies that $\bar\rho$ is
stationary.
\end{proof}

\begin{remark}
\label{rem:averaging}
Proposition~\ref{prop:energy-identity-nonsmooth} immediately yields the
averaging estimate
\[
\inf_{0\le t\le T}\|v_t\|_{\rho_t}^2
\le
\frac{\sup_{\mu\in\mathcal{P}_2(\Omega)}\mathcal{F}_E(\mu)
-\mathcal{F}_E(\rho_0)}{T}
=
O(T^{-1}),
\qquad T>0.
\]
In particular, the minimal squared speed over $[0,T]$ decays at least at
rate $T^{-1}$.
\end{remark}

\begin{remark}
\label{rem:discussion}
Together, Proposition~\ref{prop:energy-identity-nonsmooth} and
Theorem~\ref{thm:limit-stationary} provide a complete limit-point
stationarity theory for the constrained steepest-ascent flow despite the
nonsmoothness of the $E$-criterion: the flow satisfies an exact energy
identity, has finite total dissipation, and every limit point is stationary. In the simple-eigenvalue case,
Remark~\ref{rem:simple-eigenvalue} shows that the constrained Wasserstein
steepest-ascent field reduces to the projected Wasserstein gradient, so the
present framework recovers the smooth constrained flow as a special case. By
comparison, the smooth unconstrained flows for $A$- and $D$-optimality
studied by \citet{jin_optimal_2024} did not come with a corresponding
asymptotic convergence guarantee.
\end{remark}

\subsection{Particle computation and semidefinite reduction}
\label{sec:particle-computation}

The constructions in Sections~\ref{sec:steepest-ascent}
and~\ref{sec:flow} are formulated for general measures in
$\mathcal{P}_2(\Omega)$. To obtain a practical algorithm, we now specialize
to empirical measures and show that the constrained Wasserstein
steepest-ascent direction computation reduces to a low-dimensional
semidefinite programme.

For an empirical measure $\rho_N=N^{-1}\sum_{i=1}^N\delta_{x_i}$, write
$M_N := M_{\rho_N}$ for the corresponding information matrix. We seek a
particle realization of the constrained Wasserstein steepest-ascent
direction. Identifying a feasible particle direction with a vector
$v=(v_1,\ldots,v_N)\in(\R^d)^N$ satisfying $v_i\in T_{\Omega}(x_i)$, the
particle analogue of the constrained steepest-ascent direction is
\begin{equation}
\label{eq:particle-direction-problem}
v_N^\star\in\arg\max\left\{
D\mathcal{F}_E(\rho_N)[v]:
\|v\|_{\rho_N}\le 1,\;
v_i\in T_{\Omega}(x_i),\ i=1,\ldots,N
\right\}.
\end{equation}
This problem has $Nd$ optimization variables, which is prohibitive when $N$
or $d$ is large. However, the eigenspace structure of $M_N$ permits a
reduction to a convex problem whose dimension depends only on the
multiplicity $s_N$ of $\lambda_{\min}(M_N)$.

Let $V_N\in\R^{m\times s_N}$ have orthonormal columns spanning the
eigenspace of $M_N$ associated with $\lambda_{\min}(M_N)$. By
Proposition~\ref{prop:G-structure}, every $G\in \mathcal{G}(M_N)$ takes the
form $G=V_NSV_N^\T$ with $S\succeq 0$ and $\tr(S)=1$. The key idea is to
parametrize the steepest-ascent field by $S$ rather than by the particle
directions directly. By Proposition~\ref{prop:gap-representation}, the
steepest-ascent field at $\rho_N$ is
$\bar{\nabla}_{W_2}\mathcal{F}_E(\rho_N) =
\Pi_{T_{\Omega}(\cdot)}\bigl(a_{G_{\rho_N}^\star}(\cdot)\bigr)$, so
evaluating this field at each particle $x_i$ with $G=V_NSV_N^\T$ gives the
parametrized particle directions
\begin{equation}
\label{eq:diS}
v_i(S)
:=
\Pi_{T_{\Omega}(x_i)}
\bigl(
2\nabla f(x_i)^\T V_NSV_N^\T f(x_i)
\bigr).
\end{equation}
The optimal $S$ is then determined by a finite-dimensional minimization.

\begin{proposition}[Low-dimensional convex formulation]
\label{prop:particle-convex}
Let
\begin{equation}
\label{eq:particle-direction-sdp}
S_N^\star
\in
\arg\min_{S\succeq 0,\ \tr(S)=1}
\left(
\frac1N\sum_{i=1}^N \|v_i(S)\|^2
\right)^{1/2}.
\end{equation}
Then any minimizer $S_N^\star$ induces the particle realization of the
constrained Wasserstein steepest-ascent field at $\rho_N$, namely
\[
\bigl(v_1(S_N^\star),\ldots,v_N(S_N^\star)\bigr).
\]
Moreover, the optimization problem \eqref{eq:particle-direction-sdp} is
finite-dimensional and convex.
\end{proposition}

A proof is given in the appendix. The feasible set
$\{S\in\mathbb{S}^{s_N}: S\succeq 0,\ \tr(S)=1\}$ is an
$s_N\times s_N$ matrix spectrahedron, so
\eqref{eq:particle-direction-sdp} is a semidefinite programme with
dimension determined by the multiplicity $s_N$ of $\lambda_{\min}(M_N)$
rather than the $Nd$ particle variables in the original problem
\eqref{eq:particle-direction-problem}. In our implementation, this
programme is solved using SDPT3
\citep{toh_sdpt3_1999, tutuncu_solving_2003}.

The particle stationarity measure
\[
\widehat m_N(x_1,\ldots,x_N)
:=
\left(
\frac1N\sum_{i=1}^N \|v_i(S_N^\star)\|^2
\right)^{1/2}
\]
is the particle-level counterpart of $m_\Omega(\rho)$ and serves as the
stopping criterion in Algorithm~\ref{alg:particle-ascent}.

\begin{algorithm}[t]
\caption{Projected particle constrained Wasserstein steepest ascent}
\label{alg:particle-ascent}
\KwIn{Initial particles $x_1^0,\ldots,x_N^0\in\Omega$, step sizes
$\alpha_k>0$, tolerance $\varepsilon>0$}
\KwOut{Empirical design
$\rho_N^K=N^{-1}\sum_{i=1}^N\delta_{x_i^K}$}
\For{$k=0,1,2,\ldots$}{
  Form $M_k=N^{-1}\sum_{i=1}^N f(x_i^k)f(x_i^k)^\T$ and compute an
  orthonormal basis $V_k$ for the eigenspace of $\lambda_{\min}(M_k)$\;
  Solve \eqref{eq:particle-direction-sdp} to obtain $S_k^\star$\;
  Compute $v_{i,k}^\star=v_i(S_k^\star)$ via \eqref{eq:diS} for
  $i=1,\ldots,N$ and the stationarity measure $\widehat m_N(x^k)$\;
  \If{$\widehat m_N(x^k)\le\varepsilon$}{
    stop\;
  }
  Update $x_i^{k+1}=\Pi_\Omega(x_i^k+\alpha_k v_{i,k}^\star)$ for
  $i=1,\ldots,N$\;
}
\end{algorithm}

The dominant per-iteration costs are the evaluation of $f$ and $\nabla f$
at the $N$ particles, the assembly of $M_N$, and its eigendecomposition.
The semidefinite programme \eqref{eq:particle-direction-sdp} involves only
an $s_N\times s_N$ matrix variable and is typically inexpensive relative to
these costs.

\subsection{Extension to minimax designs with respect to the single parameters}
\label{sec:scaled-param}

The main development above centres on the $E$-criterion
$\mathcal{F}_E(\rho)=\lambda_{\min}(M_\rho)$, but the constrained
Wasserstein steepest-ascent framework is not specific to this criterion.
Its essential requirement is that the design criterion be a pointwise
minimum of smooth functionals, a structure shared by many classical minimax
criteria in optimal design.  To illustrate this generality, we consider the
problem of minimizing the largest diagonal element of $M_\rho^{-1}$, a
criterion studied by \citet{murty_minimax_1971} and
\citet{wong_unified_1992}.

We follow the classical formulation in which the parameterization has
already been chosen so that the individual parameters are appropriately
scaled.  Under this parameterization, the minimax-single-parameter
criterion is naturally written using the standard basis vectors
$e_1,\ldots,e_m$ of $\R^m$.  The problem is
\begin{equation}
\label{eq:sp-minmax}
\min_{\rho\in\mathcal{P}_2(\Omega)}
\max_{1\le j\le m}[M_\rho^{-1}]_{jj},
\end{equation}
which seeks a design minimizing the largest variance among all individual
parameter estimates.  Defining
$\mathcal{F}_{\mathrm{sp}}(\rho)
:=\min_{1\le j\le m}\{-[M_\rho^{-1}]_{jj}\}$,
the problem \eqref{eq:sp-minmax} becomes
$\max_{\rho\in\mathcal{P}_2(\Omega)}\mathcal{F}_{\mathrm{sp}}(\rho)$.
Like $\mathcal{F}_E$, the functional $\mathcal{F}_{\mathrm{sp}}$ is a
pointwise minimum of smooth functionals, so the constrained Wasserstein
steepest-ascent framework of
Sections~\ref{sec:directional-derivative}--\ref{sec:steepest-ascent}
applies.

For $\rho\in\mathcal{P}_2(\Omega)$ with nonsingular $M_\rho$, define the
active set
$$\mathcal{A}_{\mathrm{sp}}(\rho)
:=\arg\max_{1\le j\le m}[M_\rho^{-1}]_{jj},$$
the set of indices attaining the largest diagonal element of
$M_\rho^{-1}$.  Let
$(\rho_t)_{|t|<\varepsilon}\subset\mathcal{P}_2(\Omega)$ be an absolutely
continuous curve satisfying
$\partial_t\rho_t+\nabla\!\cdot(\rho_tu_t)=0$ with
$u_t(x)\in T_\Omega(x)$ for $\rho_t$-almost every $x$ and almost every
$t$.  For each fixed $j$, the map $\rho\mapsto [M_\rho^{-1}]_{jj}$ is
smooth whenever $M_\rho$ is nonsingular, and differentiation of
$M_{\rho_t}^{-1}$ along the flow gives, for almost every $t$,
\[
\frac{d}{dt}\bigl(-[M_{\rho_t}^{-1}]_{jj}\bigr)
=
\bigl\langle
2\nabla f(\cdot)^\T M_{\rho_t}^{-1}e_je_j^\T M_{\rho_t}^{-1}f(\cdot),\,
u_t
\bigr\rangle_{\rho_t}.
\]
Since $\mathcal{F}_{\mathrm{sp}}=\min_{1\le j\le m}(-[M_\rho^{-1}]_{jj})$
is the pointwise minimum of finitely many smooth functionals, Danskin's
theorem \citep[Prop.~B.25]{Bertsekas1999} gives
$\frac{d}{dt}\mathcal{F}_{\mathrm{sp}}(\rho_t)
=D\mathcal{F}_{\mathrm{sp}}(\rho_t)[u_t]$
for almost every $t$,
where the directional derivative is defined by
\begin{equation}
\label{eq:sp-directional}
D\mathcal{F}_{\mathrm{sp}}(\rho)[u]
:=
\min_{j\in\mathcal{A}_{\mathrm{sp}}(\rho)}
\bigl\langle
2\nabla f(\cdot)^\T M_\rho^{-1}e_je_j^\T M_\rho^{-1}f(\cdot),\,
u
\bigr\rangle_\rho.
\end{equation}

With this directional derivative, the constrained Wasserstein
steepest-ascent construction of Section~\ref{sec:steepest-ascent} carries
over directly.  The steepest-ascent direction and field are defined exactly
as in Definition~\ref{def:steepest-ascent}, with
$D\mathcal{F}_E(\rho)[u]$ replaced by
$D\mathcal{F}_{\mathrm{sp}}(\rho)[u]$ throughout: one maximizes
$D\mathcal{F}_{\mathrm{sp}}(\rho)[u]$ over the feasible unit ball $K_\rho$
to obtain the steepest-ascent direction, and the steepest-ascent field is
the product of this direction with the attained maximum.  A measure $\rho$
is stationary for \eqref{eq:sp-minmax} when no feasible direction can
increase $\mathcal{F}_{\mathrm{sp}}$.  The corresponding notions of
stationarity and particle-level implementation can be formulated
analogously.  The associated flow analysis is technically simpler than for
the $E$-criterion, because the nonsmoothness here is of finite-max type
rather than spectral.  We do not develop a separate parallel convergence
theory here.

The following analogue of Proposition~\ref{prop:gap-representation} gives an
explicit representation of the steepest-ascent field.  In contrast to the
$E$-criterion, where the particle-level direction subproblem is a
semidefinite programme over a spectrahedron, here the auxiliary minimization
is a convex programme over a probability simplex.

\begin{proposition}[Steepest-ascent field for $\mathcal{F}_{\mathrm{sp}}$]
\label{prop:sp-field}
For every $\rho\in\mathcal{P}_2(\Omega)$ with nonsingular $M_\rho$, there
exist weights $\alpha_j^\star\ge 0$ for
$j\in\mathcal{A}_{\mathrm{sp}}(\rho)$ with
$\sum_{j\in\mathcal{A}_{\mathrm{sp}}(\rho)}\alpha_j^\star=1$ such that the
constrained Wasserstein steepest-ascent field for
$\mathcal{F}_{\mathrm{sp}}$ is
\begin{equation}
\label{eq:sp-field}
\bar{\nabla}_{W_2}\mathcal{F}_{\mathrm{sp}}(\rho)(x)
=
\Pi_{T_\Omega(x)}\biggl(
2\nabla f(x)^\T M_\rho^{-1}
\Bigl(\sum_{j\in\mathcal{A}_{\mathrm{sp}}(\rho)}
\alpha_j^\star\,e_je_j^\T\Bigr)
M_\rho^{-1}f(x)\biggr),
\qquad\textit{$\rho$-a.e.\ }x,
\end{equation}
where $\alpha^\star$ minimizes
\begin{equation}
\label{eq:sp-simplex}
\biggl\|\Pi_{T_\Omega(\cdot)}\biggl(
2\nabla f(\cdot)^\T M_\rho^{-1}
\Bigl(\sum_{j\in\mathcal{A}_{\mathrm{sp}}(\rho)}
\alpha_j\,e_je_j^\T\Bigr)
M_\rho^{-1}f(\cdot)\biggr)\biggr\|_\rho
\end{equation}
over all $\alpha_j\ge 0$ with
$\sum_{j\in\mathcal{A}_{\mathrm{sp}}(\rho)}\alpha_j=1$.
\end{proposition}

\begin{proof}
Since $\mathcal{A}_{\mathrm{sp}}(\rho)$ is finite, the minimum over
$j\in\mathcal{A}_{\mathrm{sp}}(\rho)$ in \eqref{eq:sp-directional} can be
rewritten as a minimum over convex combinations:
\[
D\mathcal{F}_{\mathrm{sp}}(\rho)[u]
=
\min_{\substack{\alpha_j\ge 0,\\
\sum_{j\in\mathcal{A}_{\mathrm{sp}}(\rho)}\alpha_j=1}}
\sum_{j\in\mathcal{A}_{\mathrm{sp}}(\rho)}
\alpha_j
\bigl\langle
2\nabla f(\cdot)^\T M_\rho^{-1}e_je_j^\T M_\rho^{-1}f(\cdot),\,u
\bigr\rangle_\rho.
\]
Applying the same maximin interchange as in the proof of
Proposition~\ref{prop:gap-representation}, with the simplex of active-set
weights replacing the spectrahedron there, we obtain
\[
\sup_{u\in K_\rho}D\mathcal{F}_{\mathrm{sp}}(\rho)[u]
=
\min_{\substack{\alpha_j\ge 0,\\
\sum_{j\in\mathcal{A}_{\mathrm{sp}}(\rho)}\alpha_j=1}}
\sup_{u\in K_\rho}
\bigl\langle
2\nabla f(\cdot)^\T M_\rho^{-1}
\Bigl(\sum_{j\in\mathcal{A}_{\mathrm{sp}}(\rho)}
\alpha_j\,e_je_j^\T\Bigr)
M_\rho^{-1}f(\cdot),\,u
\bigr\rangle_\rho.
\]
For each fixed $\alpha$, the inner supremum over $K_\rho$ equals the
$L^2(\rho)$ norm of the tangent-cone projection of the corresponding vector
field, by the same Moreau decomposition argument as in the proof of
Proposition~\ref{prop:gap-representation}.  Taking the outer minimum over
$\alpha$ yields \eqref{eq:sp-field}.
\end{proof}

For an empirical measure $\rho_N=N^{-1}\sum_{i=1}^N\delta_{x_i}$, the
direction computation reduces to minimizing \eqref{eq:sp-simplex} at
$\rho=\rho_N$. The auxiliary variable $\alpha$ lies in a probability
simplex whose dimension equals the cardinality of
$\mathcal{A}_{\mathrm{sp}}(\rho_N)$. A numerical illustration is given
in Section~\ref{sec:sp-experiment}, where the method is applied to a
minimax-single-parameter design problem with a known theoretical
optimum.

\section{Numerical experiments}
\label{sec:experiments}

\subsection{Setup}

We assess the proposed methods on several optimal design problems and
compare them with the basic version of particle swarm optimization
\citep{kennedy1995particle}, which has become a standard baseline for
computing optimal designs in moderate to high dimensions
\citep{wire,zack1}. For the smooth $D$-criterion we use the particle
Wasserstein gradient flow of Corollary~\ref{cor:particle_lift}; for the
nonsmooth $E$-criterion and the minimax-single-parameter criterion of
Section~\ref{sec:scaled-param}, we use the constrained Wasserstein
steepest-ascent method of Algorithm~\ref{alg:particle-ascent}. The test
problems are the second-order response surface model
\begin{equation}
\label{eq:so}
f(x) = \Bigl(1,\, x_1,\ldots,x_k,\, x_1^2,\ldots,x_k^2,\, x_1x_2,\ldots,x_{k-1}x_k\Bigr)^\T
\end{equation}
with $k\in\{2,5\}$ on the unit cube $[-1,1]^k$ and the unit ball
$\{x\in\R^k:\|x\|\le 1\}$, together with a seven-dimensional logistic
regression model on $[-3,3]^7$. All runs use $N=100$ particles
initialized uniformly on $\Omega$; for the $E$-criterion, the direction
subproblem~\eqref{eq:particle-direction-sdp} is solved using SDPT3
\citep{toh_sdpt3_1999,tutuncu_solving_2003}. Particle swarm optimization maintains a swarm of candidate designs whose positions are updated at each iteration by a stochastic combination of each member's best past position and the swarm's global best, weighted by cognitive and social coefficients and an inertia term carrying over the previous velocity. We implement it in MATLAB by adapting the publicly available code of \citet{biswas2014pso}; each swarm member encodes an equal-weight $N$-point design, the swarm size is $100$, the iteration budget is $1000$, and the results are summarized over $100$ independent runs. Full update rules and parameter values are given in the appendix.  Table~\ref{tab:results} reports the objective
values. MATLAB codes for reproducing the numerical results in this paper are available at \texttt{https://github.com/Jieling-Shi/wgf-optimal-design-matlab}.

\begin{table}[t]
\centering
\caption{Objective values for optimal design problems under various criteria and design regions.}
\small
\label{tab:results}
\begin{tabular}{llcrrrr}
Criterion & Case & WGF/CWSA & \multicolumn{3}{c}{PSO} & Optimal \\
\cline{4-6}
          &      &          & Best & Mean & Worst & \\
$D$ & SO($k=5$), cube & $-14.716$ & $-14.845$ & $-19.564$ & $-25.471$ & $-14.270$ \\
$D$ & SO($k=5$), ball & $-60.681$ & $-60.695$ & $-61.019$ & $-62.808$ & $-60.680$ \\
$E$ & SO($k=2$), cube & $0.200$ & $0.199$ & $0.193$ & $0.183$ & $0.200$ \\
$E$ & SO($k=2$), ball & $0.100$ & $0.099$ & $0.096$ & $0.093$ & $0.100$ \\
$E$ & SO($k=5$), ball & $0.027$ & $0.025$ & $0.024$ & $0.023$ & $0.027$ \\
$E$ & SO($k=5$), cube & $0.192$ & $0.167$ & $0.153$ & $0.062$ & $0.200$ \\
$E$ & Logistic, $[-3,3]^7$ & $0.154$ & $0.153$ & $0.152$ & $0.150$ & \\
SP  & SO($k=5$), cube & $-1.000$ & $-1.045$ & $-1.144$ & $-1.275$ & $-1.000$ \\
\end{tabular}

\smallskip
\noindent\footnotesize WGF, particle Wasserstein gradient flow (Corollary~\ref{cor:particle_lift}), used for the $D$-criterion; CWSA, constrained Wasserstein steepest ascent (Algorithm~\ref{alg:particle-ascent}), used for the $E$- and SP-criteria; PSO, particle swarm optimization (swarm size $100$, results over $100$ independent runs); SO, second-order response surface model; SP, minimax-single-parameter criterion (Section~\ref{sec:scaled-param}), using an orthonormal parameterization. Optimal: theoretical value, when available.
\end{table}

\subsection[D-optimal design]{$D$-optimal design}

The $D$-criterion $\mathcal{F}_D(\rho)=\log\det(M_{\rho})$ is smooth, so
the particle Wasserstein gradient flow of
Corollary~\ref{cor:particle_lift} applies directly. We consider the model \eqref{eq:so} with $k=5$ on the unit cube and ball, for which the
theoretical $D$-optimal values are available from
\citet{kiefer1959optimum}. Figure~\ref{fig:Dopt} shows that the particle
Wasserstein gradient flow enters the near-optimal regime within a small
number of iterations and then stabilizes on both regions, attaining the
theoretical optimum in each case. Particle swarm optimization matches it
in its best runs, but the mean across runs falls below the optimum,
modestly on the ball and substantially on the cube, with correspondingly
larger dispersion; see the $D$-optimal rows of Table~\ref{tab:results}.

\begin{figure}[t]
\centering
\includegraphics[width=0.48\textwidth]{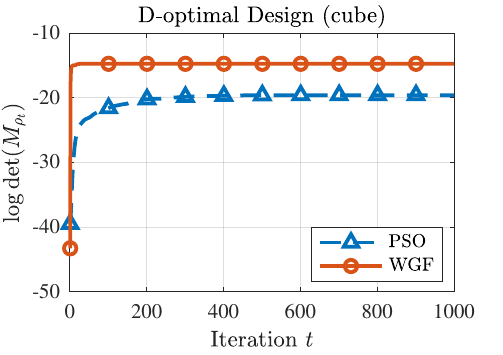}\hfill
\includegraphics[width=0.48\textwidth]{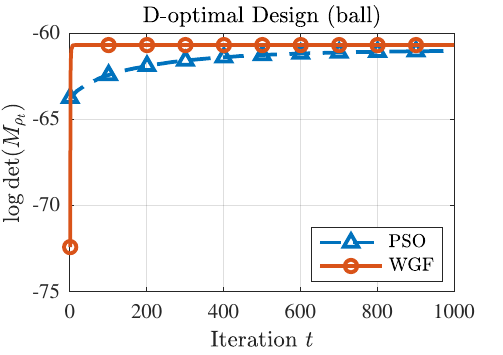}
\caption{Convergence of the particle Wasserstein gradient flow (solid) and particle swarm optimization (dashed, averaged over $100$ runs) for the $D$-optimal design on the unit cube (left) and unit ball (right), with $k=5$.}
\label{fig:Dopt}
\end{figure}

\subsection[E-optimal design]{$E$-optimal design}

\textit{Second-order response surface model.}
We apply Algorithm~\ref{alg:particle-ascent} to the model \eqref{eq:so}
under the $E$-criterion $\mathcal{F}_E(\rho)=\lambda_{\min}(M_{\rho})$,
with $k\in\{2,5\}$ on the unit cube and ball; theoretical benchmarks are
from \citet{dette_e-optimal_2014}. In all four configurations, the
constrained Wasserstein steepest-ascent method enters the near-optimal
regime within a small number of iterations and stabilizes there. At
$k=2$, particle swarm optimization also reaches the theoretical optimum
on both regions, with only moderate dispersion across runs. At $k=5$,
the two methods separate: on the ball, the constrained Wasserstein
steepest-ascent method attains the optimum while the best particle
swarm run falls short of it; on the cube, the constrained Wasserstein
steepest-ascent method remains close to the optimum, whereas even the
best particle swarm run lies well below it and its worst-case value
drops further still. In the $k=5$ cases, particle swarm optimization is
still climbing at the end of the iteration budget without reaching the
plateau; see the $E$-optimal response surface rows of
Table~\ref{tab:results} and Fig.~\ref{fig:E_so}.

\noindent\textit{Logistic regression model.}
We consider the logistic regression model of \citet{stella}, in which
the mean response $\mu(x,\theta)$ satisfies the canonical logit link
\[
\log\left\{\frac{\mu(x,\theta)}{1-\mu(x,\theta)}\right\}=v(x)^\T\theta,
\qquad
v(x)=(1,x_1,\ldots,x_7)^\T,
\]
with nominal parameter
\[
\theta^\ast=(-0.4926,-0.6280,-0.3283,0.4378,0.5283,-0.6120,-0.6837,-0.2061)^\T.
\]
The design space is $[-3,3]^7$; the model has $m=8$ parameters and $d=7$
design variables, and no closed-form optimum is available. A single run
of the constrained Wasserstein steepest-ascent method exceeds the best
of $100$ independent particle swarm runs, which themselves exhibit
substantial dispersion; see Table~\ref{tab:results} and
Fig.~\ref{fig:Eopt-logistic}.

\begin{figure}[t]
\centering
\includegraphics[width=0.48\textwidth]{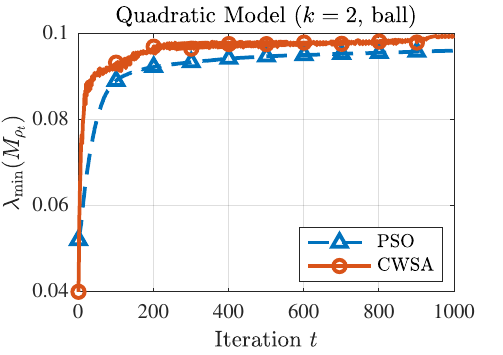}\hfill
\includegraphics[width=0.48\textwidth]{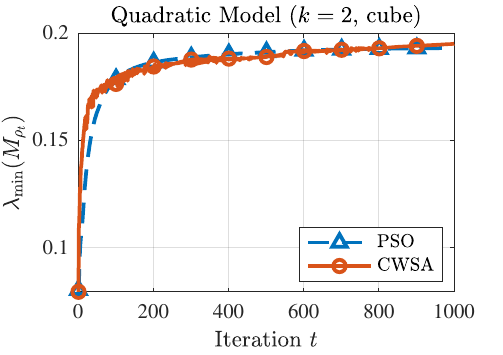}
\vspace{2mm}
\includegraphics[width=0.48\textwidth]{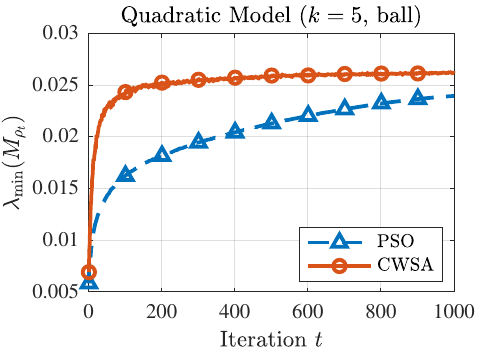}\hfill
\includegraphics[width=0.48\textwidth]{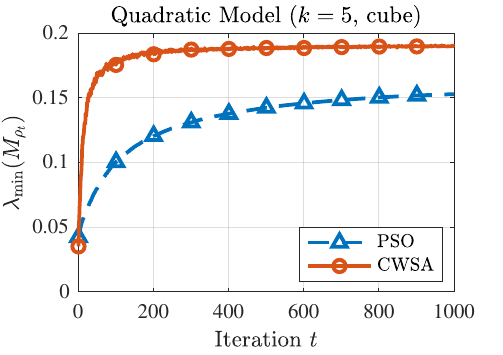}
\caption{Convergence of the constrained Wasserstein steepest-ascent method (solid) and particle swarm optimization (dashed, averaged over $100$ runs) for the $E$-optimal design on the second-order response surface model: $k=2$ (top) and $k=5$ (bottom), on the unit ball (left) and unit cube (right).}
\label{fig:E_so}
\end{figure}

\begin{figure}[t]
\centering
\begin{subfigure}[t]{0.48\textwidth}
  \centering
  \includegraphics[width=\textwidth]{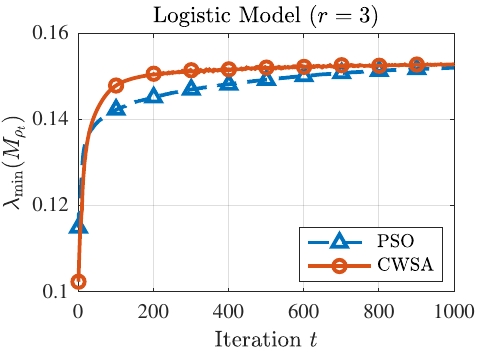}
  \caption{}
  \label{fig:Eopt-logistic}
\end{subfigure}\hfill
\begin{subfigure}[t]{0.48\textwidth}
  \centering
  \includegraphics[width=\textwidth]{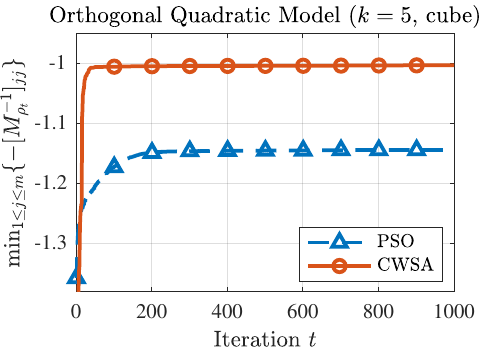}
  \caption{}
  \label{fig:sp}
\end{subfigure}
\caption{Convergence of the constrained Wasserstein steepest-ascent method (solid) and particle swarm optimization (dashed, averaged over $100$ runs). (a) $E$-optimal design in the logistic regression model on $[-3,3]^7$; the constrained Wasserstein steepest-ascent curve is smoothed by averaging over every $10$ iterations. (b) Minimax-single-parameter design on the second-order response surface model with $k=5$ on the unit cube, using an orthonormal parameterization.}
\label{fig:E-sp}
\end{figure}

\subsection{Minimax-single-parameter design}
\label{sec:sp-experiment}

\citet{murty_minimax_1971}  proposed the minimax-single-parameter
problem,
\[
\min_{\rho\in\mathcal{P}_2(\Omega)}\,\max_{1\le j\le m}[M_\rho^{-1}]_{jj},
\]
and, using a standard elementary minimax result of game theory,
derived a sufficient condition for optimality and obtained optimal
designs for polynomial regression models by verifying it analytically.
The minimax-single-parameter criterion is not
invariant under general rescaling of the model parameters, so we adopt
an orthonormal basis under the uniform measure on $[-1,1]^5$. For
$k=5$, this basis consists of the $m=21$ functions $1$,
$\sqrt{3}\,x_i$ ($i=1,\ldots,5$), $\sqrt{5}\,(3x_i^2-1)/2$
($i=1,\ldots,5$) and $3x_ix_j$ ($i<j$); we denote the resulting
regression vector by $\tilde f(x)$ and write
$M_\rho=\int_\Omega \tilde f(x)\tilde f(x)^\T\rho(dx)$ for the
corresponding information matrix. With this choice of basis, the
uniform measure on the cube gives $M_\rho=I_m$.

To align with the ascent formulation, we work with
$\mathcal{F}_{\mathrm{sp}}(\rho)=\min_{1\le j\le m}\bigl\{-[M_\rho^{-1}]_{jj}\bigr\}$
and equivalently solve
$\max_{\rho\in\mathcal{P}_2(\Omega)}\mathcal{F}_{\mathrm{sp}}(\rho)$,
as introduced in Section~\ref{sec:scaled-param}. Since the first
component of $\tilde f$ is the constant function $1$,
$[M_\rho]_{11}=1$ for every $\rho\in\mathcal{P}_2(\Omega)$. As
$[M^{-1}]_{jj}\ge 1/[M]_{jj}$ for any positive definite $M$, this gives
$[M_\rho^{-1}]_{11}\ge 1$ and hence $\mathcal{F}_{\mathrm{sp}}(\rho)\le
-1$. The uniform measure on $\Omega$ gives $M_\rho=I_m$ and attains
this bound, which is therefore the theoretical optimum.

The constrained Wasserstein steepest-ascent method enters the
near-optimal regime within a small number of iterations and stabilizes
at the theoretical optimum, whereas particle swarm optimization falls
short of it and its mean across runs deteriorates markedly; see the
last row of Table~\ref{tab:results} and Fig.~\ref{fig:sp}. The pattern
matches the higher-dimensional $E$-optimal experiments and indicates
that the reliability of the constrained Wasserstein steepest-ascent
method relative to particle swarm optimization is not specific to the
$E$-criterion. The proposed method does not rely on model-specific
sufficient conditions of the type used by \citet{murty_minimax_1971}
and requires only continuous differentiability of the regression
vector on $\Omega$, so it extends directly beyond the polynomial
setting of that work.

\section{Discussion}\label{sec:discussion}

The constrained Wasserstein steepest-ascent framework developed here
provides a principled gradient-based approach for finding $E$-optimal
designs despite the nonsmoothness of the $\lambda_{\min}$ criterion. The
energy identity and limit-point stationarity theorem provide the first
rigorous asymptotic guarantee for a Wasserstein-based method applied to
$E$-optimal design problems, and the semidefinite reduction yields a
practical algorithm by confining the direction computation to a matrix
variable whose dimension equals the multiplicity of the smallest
eigenvalue. The numerical experiments confirm that the method is especially
effective in higher-dimensional settings where the basic version of
particle swarm optimization becomes less reliable. The extension to the
minimax-single-parameter criterion and the accompanying numerical
experiments demonstrate that the framework is not confined to the
$E$-optimality criterion and applies to other design problems with a
nonsmooth minimax objective as well.

\section*{Acknowledgements}
The research of Tong was partially supported by the Ministry of Education, Singapore, under Tier 1 Grant A-8002956-00-00. The research of Wong was partially supported by the Yushan Fellow Program of the Ministry of Education, Taiwan (Grant MOE-108-YSFMS-0004-012-P1).

\clearpage
\appendix

\makeatletter
\@addtoreset{equation}{section}
\@addtoreset{figure}{section}
\@addtoreset{table}{section}
\@addtoreset{theorem}{section}
\@addtoreset{lemma}{section}
\@addtoreset{corollary}{section}
\@addtoreset{proposition}{section}
\@addtoreset{definition}{section}
\@addtoreset{assumption}{section}
\@addtoreset{remark}{section}
\renewcommand{\theequation}{\thesection.\arabic{equation}}
\renewcommand{\thefigure}{\thesection.\arabic{figure}}
\renewcommand{\thetable}{\thesection.\arabic{table}}
\renewcommand{\thetheorem}{\thesection.\arabic{theorem}}
\renewcommand{\thelemma}{\thesection.\arabic{lemma}}
\renewcommand{\thecorollary}{\thesection.\arabic{corollary}}
\renewcommand{\theproposition}{\thesection.\arabic{proposition}}
\renewcommand{\thedefinition}{\thesection.\arabic{definition}}
\renewcommand{\theassumption}{\thesection.\arabic{assumption}}
\renewcommand{\theremark}{\thesection.\arabic{remark}}
\renewcommand{\theHsection}{appendix.\Alph{section}}
\renewcommand{\theHsubsection}{appendix.\Alph{section}.\arabic{subsection}}
\renewcommand{\theHequation}{appendix.\Alph{section}.\arabic{equation}}
\renewcommand{\theHfigure}{appendix.\Alph{section}.\arabic{figure}}
\renewcommand{\theHtable}{appendix.\Alph{section}.\arabic{table}}
\renewcommand{\theHtheorem}{appendix.\Alph{section}.\arabic{theorem}}
\renewcommand{\theHlemma}{appendix.\Alph{section}.\arabic{lemma}}
\renewcommand{\theHcorollary}{appendix.\Alph{section}.\arabic{corollary}}
\renewcommand{\theHproposition}{appendix.\Alph{section}.\arabic{proposition}}
\renewcommand{\theHdefinition}{appendix.\Alph{section}.\arabic{definition}}
\renewcommand{\theHassumption}{appendix.\Alph{section}.\arabic{assumption}}
\renewcommand{\theHremark}{appendix.\Alph{section}.\arabic{remark}}
\makeatother

\section*{Appendices}

These appendices contain proofs of results stated in Sections~\ref{sec:smooth} and~\ref{sec:nonsmooth} of the main paper, together with implementation details for the particle swarm optimization benchmark of Section~\ref{sec:experiments}. Throughout, the notation is consistent with that used in the main text.

\section{Additional preliminaries}

\subsection{Pushforward perturbations and Wasserstein first variations}
The following characterization is a reformulation of standard first-variation results in Wasserstein space; see, for example, \citep[Ch.~8]{ambrosio_gradient_2008} and \citep[\S~5.5]{santambrogio2015ot}.

\begin{lemma}[Pushforward characterization of the Wasserstein gradient]
\label{lem:pushforward-characterization}
Let $\mathcal F:\mathcal P_2(\R^d)\to\R$ be Wasserstein differentiable at
$\rho\in\mathcal P_2(\R^d)$ in the sense of Definition~\ref{def:Wassersteingrad},
and let
\[
G\in \operatorname{Tan}_{\rho}\mathcal P_2(\R^d).
\]
Then the following are equivalent:
\begin{enumerate}
\item[(i)] $G=\nabla_{W_2}\mathcal F(\rho)$ in
$L^2(\rho;\R^d)$;
\item[(ii)] for every $v\in \operatorname{Tan}_{\rho}\mathcal P_2(\R^d)$ and every sufficiently small $\varepsilon\in\R$, if
\[
T_\varepsilon(x):=x+\varepsilon v(x),
\qquad
\rho^\varepsilon:=(T_\varepsilon)_\#\rho,
\]
then
\begin{equation}
\label{eq:pushforward-expansion}
\mathcal F(\rho^\varepsilon)-\mathcal F(\rho)
=
\varepsilon
\int_{\R^d}
\langle G(x),v(x)\rangle\,\rho(dx)
+
o\!\bigl(|\varepsilon|\,\|v\|_{L^2(\rho)}\bigr),
\qquad
\varepsilon\to0.
\end{equation}
\end{enumerate}
In particular, $\nabla_{W_2}\mathcal F(\rho)$ is the unique element of
$\operatorname{Tan}_{\rho}\mathcal P_2(\R^d)$ satisfying
\eqref{eq:pushforward-expansion}.
\end{lemma}

\begin{proof}
Let
\[
H:=\nabla_{W_2}\mathcal F(\rho)\in \operatorname{Tan}_{\rho}\mathcal P_2(\R^d).
\]

We first prove (i)$\Rightarrow$(ii). Assume that $G=H$, and fix
$v\in \operatorname{Tan}_{\rho}\mathcal P_2(\R^d)$. If $v=0$, then
$\rho^\varepsilon=\rho$ for all $\varepsilon$, and
\eqref{eq:pushforward-expansion} is trivial. Hence suppose $v\neq 0$.
Set
\[
T_\varepsilon:=\Id+\varepsilon v,
\qquad
\rho^\varepsilon:=(T_\varepsilon)_\#\rho.
\]
Since $v\in L^2(\rho;\R^d)$, we have $\rho^\varepsilon\in \mathcal P_2(\R^d)$
for all sufficiently small $\varepsilon$. Moreover, because
$v\in \operatorname{Tan}_{\rho}\mathcal P_2(\R^d)$, the curve
$\varepsilon\mapsto \rho^\varepsilon$ has initial velocity $v$ at $\rho$; more precisely,
\begin{equation}
\label{eq:tangent-curve-metric-speed}
W_2(\rho,\rho^\varepsilon)
=
|\varepsilon|\,\|v\|_{L^2(\rho)}
+
o(|\varepsilon|),
\qquad
\varepsilon\to0,
\end{equation}
and, as $\varepsilon\to0$,
\begin{equation}
\label{eq:tangent-curve-optimal-map}
\frac{T_\rho^{\rho^\varepsilon}-\Id}{\varepsilon}
\to
v
\end{equation}
in $L^2(\rho;\R^d)$, where $T_\rho^{\rho^\varepsilon}$ denotes the optimal transport map from
$\rho$ to $\rho^\varepsilon$; see, for example,
\citet[Ch.~8]{ambrosio_gradient_2008} and
\citet[\S~5.5]{santambrogio2015ot}.

By Wasserstein differentiability of $\mathcal F$ at $\rho$ in the sense of
Definition~\ref{def:Wassersteingrad},
\[
\mathcal F(\rho^\varepsilon)-\mathcal F(\rho)
=
\int_{\R^d}
\bigl\langle
H(x),\,T_\rho^{\rho^\varepsilon}(x)-x
\bigr\rangle
\,\rho(dx)
+
o\!\bigl(W_2(\rho,\rho^\varepsilon)\bigr),
\qquad
\varepsilon\to0.
\]
Using \eqref{eq:tangent-curve-optimal-map} and $H\in L^2(\rho;\R^d)$, we obtain
\[
\int_{\R^d}
\bigl\langle
H(x),\,T_\rho^{\rho^\varepsilon}(x)-x
\bigr\rangle
\,\rho(dx)
=
\varepsilon
\int_{\R^d}
\langle H(x),v(x)\rangle\,\rho(dx)
+
o(|\varepsilon|).
\]
Since $\|v\|_{L^2(\rho)}>0$ is fixed, this can be rewritten as
\[
\int_{\R^d}
\bigl\langle
H(x),\,T_\rho^{\rho^\varepsilon}(x)-x
\bigr\rangle
\,\rho(dx)
=
\varepsilon
\int_{\R^d}
\langle H(x),v(x)\rangle\,\rho(dx)
+
o\!\bigl(|\varepsilon|\,\|v\|_{L^2(\rho)}\bigr).
\]
Likewise, \eqref{eq:tangent-curve-metric-speed} yields
\[
o\!\bigl(W_2(\rho,\rho^\varepsilon)\bigr)
=
o\!\bigl(|\varepsilon|\,\|v\|_{L^2(\rho)}\bigr).
\]
Substituting the last two relations into the differentiability expansion gives
\eqref{eq:pushforward-expansion}. This proves (ii).

We now prove (ii)$\Rightarrow$(i). Assume that $G\in
\operatorname{Tan}_{\rho}\mathcal P_2(\R^d)$ satisfies
\eqref{eq:pushforward-expansion} for every
$v\in \operatorname{Tan}_{\rho}\mathcal P_2(\R^d)$. By the first part of the proof,
$H=\nabla_{W_2}\mathcal F(\rho)$ also satisfies
\eqref{eq:pushforward-expansion}. Subtracting the two expansions yields
\[
\varepsilon
\int_{\R^d}
\langle G(x)-H(x),v(x)\rangle\,\rho(dx)
=
o\!\bigl(|\varepsilon|\,\|v\|_{L^2(\rho)}\bigr)
\]
for every $v\in \operatorname{Tan}_{\rho}\mathcal P_2(\R^d)$.
Taking $\varepsilon>0$, dividing by $\varepsilon$, and letting
$\varepsilon\downarrow0$, we obtain
\[
\int_{\R^d}
\langle G(x)-H(x),v(x)\rangle\,\rho(dx)=0,
\qquad
\forall\,v\in \operatorname{Tan}_{\rho}\mathcal P_2(\R^d).
\]
Since both $G$ and $H$ belong to
$\operatorname{Tan}_{\rho}\mathcal P_2(\R^d)$, we may choose
$v=G-H$ and conclude that
\[
\|G-H\|_{L^2(\rho)}^2=0.
\]
Hence $G=H$ in $L^2(\rho;\R^d)$, which proves (i). The uniqueness statement follows immediately.
\end{proof}

\subsection{Finite-point gradient interpolation}
\begin{lemma}[Prescribed gradients at finitely many points]
\label{lem:compactly_supported_gradient_interpolation}
Let $x_1,\ldots,x_N\in\R^d$ be pairwise distinct, and let
$\psi_1,\ldots,\psi_N\in\R^d$ be arbitrary. Then there exists
$\varphi\in C_c^\infty(\R^d)$ such that
\[
\nabla\varphi(x_i)=\psi_i,
\qquad
i=1,\ldots,N.
\]
\end{lemma}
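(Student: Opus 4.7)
The plan is to construct $\varphi$ as a sum of $N$ disjointly supported smooth functions, each of which is engineered to realize the prescribed gradient at the corresponding $x_i$ while vanishing near all other $x_j$.

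First I would exploit the pairwise distinctness of $x_1,\dots,x_N$ to choose a radius $r>0$ small enough that the closed balls $\overline{B(x_i,r)}$ are mutually disjoint. Then, using standard mollifier/bump function constructions (for instance, radial cutoffs built from $\exp(-1/(r^2-\|x-x_i\|^2))$), I would select $\chi_i \in C_c^\infty(\mathbb R^d)$ with $\mathrm{supp}(\chi_i)\subset B(x_i,r)$ and $\chi_i \equiv 1$ on a smaller ball $B(x_i,r/2)$. In particular, $\chi_i(x_i)=1$ and $\nabla\chi_i(x_i)=0$ since $\chi_i$ is constant in a neighborhood of $x_i$.

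Next, I would set
\[
\varphi_i(x) \;:=\; \chi_i(x)\,\bigl\langle \psi_i,\, x - x_i\bigr\rangle,
\qquad
\varphi(x) \;:=\; \sum_{i=1}^N \varphi_i(x).
\]
Each $\varphi_i$ is a product of a $C_c^\infty$ function and a smooth affine function, hence belongs to $C_c^\infty(\mathbb R^d)$ with $\mathrm{supp}(\varphi_i)\subset B(x_i,r)$. Since the supports are disjoint, $\varphi\in C_c^\infty(\mathbb R^d)$ as well. A direct computation gives
\[
\nabla\varphi_i(x) \;=\; \bigl\langle \psi_i,\, x - x_i\bigr\rangle\,\nabla\chi_i(x) \;+\; \chi_i(x)\,\psi_i,
\]
and evaluating at $x_i$ yields $\nabla\varphi_i(x_i)=\chi_i(x_i)\psi_i=\psi_i$ because the first term vanishes at $x=x_i$.

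Finally, I would fix $j\in\{1,\dots,N\}$ and note that for $i\neq j$ the point $x_j$ lies outside $\mathrm{supp}(\varphi_i)$, so $\nabla\varphi_i(x_j)=0$. Therefore $\nabla\varphi(x_j)=\nabla\varphi_j(x_j)=\psi_j$, as required. There is no real obstacle here: the construction is entirely elementary, and the only mild subtlety is ensuring $\nabla\chi_i(x_i)=0$, which is handled automatically by taking $\chi_i$ constant on a neighborhood of $x_i$.
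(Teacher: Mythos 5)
Your proof is correct and follows essentially the same route as the paper: disjointly supported smooth cutoffs $\chi_i$ centered at the $x_i$, multiplied by the linear functions $\langle \psi_i, x-x_i\rangle$, with the key point being $\chi_i(x_i)=1$ and $\nabla\chi_i(x_i)=0$. The only cosmetic difference is that you force $\nabla\chi_i(x_i)=0$ by making $\chi_i$ identically $1$ near $x_i$, whereas the paper uses a radial bump with vanishing derivative at the origin; both are standard and equivalent here.
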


\begin{proof}
Choose a radial function $\eta\in C_c^\infty(\R^d)$ such that
\[
\eta(0)=1,
\qquad
\operatorname{supp}(\eta)\subset B(0,1).
\]
Since $\eta$ is smooth and radial, we also have $\nabla\eta(0)=0$.
Set
\[
\delta:=\frac12\min_{i\neq j}\|x_i-x_j\|>0,
\]
and choose $\varepsilon_i\in(0,\delta)$ for $i=1,\ldots,N$. Define
\[
\chi_i(x):=\eta\!\left(\frac{x-x_i}{\varepsilon_i}\right),
\qquad
i=1,\ldots,N.
\]
Then each $\chi_i$ belongs to $C_c^\infty(\R^d)$,
\[
\chi_i(x_i)=1,
\qquad
\nabla\chi_i(x_i)=0,
\]
and
\[
\operatorname{supp}(\chi_i)\subset B(x_i,\varepsilon_i)\subset B(x_i,\delta).
\]
Hence the supports are pairwise disjoint. In particular, for $j\neq i$,
\[
\chi_i(x_j)=0,
\qquad
\nabla\chi_i(x_j)=0.
\]

Now define
\[
\varphi(x)
:=
\sum_{i=1}^N
\bigl[\psi_i^\T(x-x_i)\bigr]\chi_i(x).
\]
Clearly $\varphi\in C_c^\infty(\R^d)$. Differentiating gives
\[
\nabla\varphi(x)
=
\sum_{i=1}^N
\Bigl(
\psi_i\,\chi_i(x)
+
\bigl[\psi_i^\T(x-x_i)\bigr]\nabla\chi_i(x)
\Bigr).
\]
Evaluating at $x=x_j$, all terms with $i\neq j$ vanish, while for $i=j$ we use
$\chi_j(x_j)=1$ and $\nabla\chi_j(x_j)=0$ to obtain
\[
\nabla\varphi(x_j)=\psi_j.
\]
This proves the claim.
\end{proof}

\subsection[Continuity of the minimizer set G(M)]{Continuity of the minimizer set \(\mathcal G(M)\)}
\begin{lemma}[Closed-graph property of the minimizer set]
\label{lem:closed-graph-G}
Let $(M_n)_{n\geq 1}$ be a sequence in $\mathbb S^m$ such that
\[
M_n\to M,
\]
and let $(G_n)_{n\geq 1}$ satisfy
\[
G_n\in \mathcal G(M_n),
\qquad
G_n\to G
\]
in $\mathbb S^m$. Then $G\in \mathcal G(M)$.
\end{lemma}

\begin{proof}
Since $G_n\in \mathcal G(M_n)\subset\Delta$ for every $n$ and $\Delta$ is closed in
$\mathbb S^m$, we
have $G\in\Delta$. Moreover,
\[
\tr(G_nM_n)=\lambda_{\min}(M_n),
\qquad
n\geq 1.
\]
Passing to the limit and using continuity of the trace pairing and of
$\lambda_{\min}$ on $\mathbb S^m$, we obtain
\[
\tr(GM)=\lambda_{\min}(M).
\]
Because
\[
\lambda_{\min}(M)=\min_{H\in\Delta}\tr(HM),
\]
it follows that $G\in \mathcal G(M)$.
\end{proof}

The following lower-semicontinuity statement is a convenient consequence of standard facts on normal and tangent cones, Moreau decomposition, and weak convergence, and is stated separately for later use.

\subsection{Lower semicontinuity of projected norms}
The following lemma combines standard facts on normal and tangent cones, Moreau decomposition, and weak convergence; see, for example, \citep[Example~6.24]{RockafellarWets1998}, \citep[Ch.~6]{bauschke2011convex}, and \citep[Theorem~2.1]{billingsley1999convergence}.

\begin{lemma}[Lower semicontinuity of projected norms]
\label{lem:projected-norm-lsc}
Define
\[
\psi(x,G):=
\left\|
\Pi_{T_{\Omega}(x)}\bigl(a_G(x)\bigr)
\right\|^2,
\qquad
(x,G)\in\Omega\times\Delta.
\]
Then the following hold.
\begin{enumerate}
\item[(i)] The map $\psi$ is bounded and lower semicontinuous on
$\Omega\times\Delta$.
\item[(ii)] If $\rho_n,\rho\in\mathcal P_2(\Omega)$ satisfy $\rho_n\to\rho$ in
$W_2$, and if $G_n,G\in\Delta$ satisfy $G_n\to G$ in $\mathbb S^m$, then
\[
\int_{\Omega}\psi(x,G)\,\rho(dx)
\le
\liminf_{n\to\infty}
\int_{\Omega}\psi(x,G_n)\,\rho_n(dx).
\]
\end{enumerate}
\end{lemma}

\begin{proof}
Since $\Omega$ is compact and convex, it is closed and convex. Hence, for each
$x\in\Omega$, the tangent cone $T_{\Omega}(x)$ is a closed convex cone, and the
normal cone satisfies
\[
N_{\Omega}(x)=T_{\Omega}(x)^\circ;
\]
see, for example, \citet[Example~6.24]{RockafellarWets1998}. Therefore Moreau's
decomposition for closed convex cones gives
\[
\psi(x,G)
=
\dist\!\bigl(a_G(x),N_{\Omega}(x)\bigr)^2,
\qquad
(x,G)\in\Omega\times\Delta;
\]
see, for example, \citet[Ch.~6]{bauschke2011convex}.

Because $f$ and $\nabla f$ are continuous on compact $\Omega$, and $\Delta$ is
compact, the map $(x,G)\mapsto a_G(x)$ is continuous and bounded on
$\Omega\times\Delta$. Hence $\psi$ is bounded.

To prove lower semicontinuity, let $(x_n,G_n)\to(x,G)$ in $\Omega\times\Delta$,
and pass to a subsequence, not relabelled, such that
\[
\psi(x_n,G_n)\to \liminf_{k\to\infty}\psi(x_k,G_k).
\]
For each $n$, choose $z_n\in N_{\Omega}(x_n)$ such that
\[
\|a_{G_n}(x_n)-z_n\|^2
\le
\psi(x_n,G_n)+\frac1n.
\]
Since both $\psi$ and $(x,G)\mapsto a_G(x)$ are bounded on
$\Omega\times\Delta$, the sequence $(z_n)$ is bounded. Passing to a further
subsequence if necessary, we may assume that $z_n\to z$ in $\R^d$. For every
$y\in\Omega$,
\[
\langle z_n,y-x_n\rangle\le 0,
\]
because $z_n\in N_{\Omega}(x_n)$. Letting $n\to\infty$ yields
\[
\langle z,y-x\rangle\le 0,
\qquad
\forall\,y\in\Omega,
\]
so $z\in N_{\Omega}(x)$. Using continuity of $(x,G)\mapsto a_G(x)$, we obtain
\[
\psi(x,G)
\le
\|a_G(x)-z\|^2
=
\lim_{n\to\infty}\|a_{G_n}(x_n)-z_n\|^2
\le
\lim_{n\to\infty}\left(\psi(x_n,G_n)+\frac1n\right)
=
\liminf_{n\to\infty}\psi(x_n,G_n).
\]
This proves (i).

For (ii), define probability measures on $\Omega\times\Delta$ by
\[
\nu_n:=\rho_n\otimes\delta_{G_n},
\qquad
\nu:=\rho\otimes\delta_G.
\]
Let $h$ be bounded and continuous on $\Omega\times\Delta$. Then
\[
\int_{\Omega\times\Delta} h(x,H)\,\nu_n(dxdH)
=
\int_{\Omega} h(x,G_n)\,\rho_n(dx).
\]
Since $G_n\to G$ in $\mathbb S^m$ and $\Omega\times\Delta$ is compact,
$h(\cdot,G_n)\to h(\cdot,G)$ uniformly on $\Omega$. Hence
\[
\int_{\Omega} h(x,G_n)\,\rho_n(dx)
-
\int_{\Omega} h(x,G)\,\rho_n(dx)\to 0.
\]
Since $\rho_n\to\rho$ in $W_2$ and $\Omega$ is compact, $\rho_n$ converges
weakly to $\rho$. Therefore
\[
\int_{\Omega} h(x,G)\,\rho_n(dx)\to \int_{\Omega} h(x,G)\,\rho(dx).
\]
Thus
\[
\int_{\Omega\times\Delta} h(x,H)\,\nu_n(dxdH)
\to
\int_{\Omega\times\Delta} h(x,H)\,\nu(dxdH),
\]
that is, $\nu_n\Rightarrow \nu$ weakly as probability measures on
$\Omega\times\Delta$. Applying the Portmanteau theorem to the bounded lower
semicontinuous function $\psi$ gives
\[
\int_{\Omega\times\Delta}\psi(x,H)\,\nu(dxdH)
\le
\liminf_{n\to\infty}
\int_{\Omega\times\Delta}\psi(x,H)\,\nu_n(dxdH);
\]
see, for example, \citet[Theorem~2.1]{billingsley1999convergence}. This is
exactly the asserted inequality.
\end{proof}

\section{Proofs for Section~\ref{sec:smooth}}

\subsection{Proof of Proposition~\ref{prop:particle_gradient}}
\begin{proof}
Let
\[
\psi=(\psi_1,\ldots,\psi_N)\in(\R^d)^N
\]
be arbitrary. Since $\mathcal F_N$ is differentiable at
$x=(x_1,\ldots,x_N)$, we have
\begin{equation}
\label{eq:particle-gradient-euclidean-expansion}
\mathcal F_N(x+\varepsilon\psi)-\mathcal F_N(x)
=
\varepsilon\sum_{i=1}^N
\left\langle \nabla_{x_i}\mathcal F_N(x),\psi_i\right\rangle
+
o(|\varepsilon|),
\qquad
\varepsilon\to0.
\end{equation}

By Lemma~\ref{lem:compactly_supported_gradient_interpolation}, there exists
$\varphi\in C_c^\infty(\R^d)$ such that
\[
\nabla\varphi(x_i)=\psi_i,
\qquad
i=1,\ldots,N.
\]
Set
\[
\rho_N^\varepsilon:=(\Id+\varepsilon \nabla\varphi)_\#\rho_N.
\]
Since $\rho_N=N^{-1}\sum_{i=1}^N\delta_{x_i}$ and
$\nabla\varphi(x_i)=\psi_i$, we have
\[
\rho_N^\varepsilon
=
\frac1N\sum_{i=1}^N \delta_{x_i+\varepsilon\psi_i}.
\]
Hence $\rho_N^\varepsilon$ is precisely the empirical measure associated with
the perturbed particle vector $x+\varepsilon\psi$. By definition of $\mathcal F_N$,
\begin{equation}
\label{eq:particle-gradient-empirical-identity}
\mathcal F_N(x+\varepsilon\psi)=\mathcal F(\rho_N^\varepsilon),
\qquad
\mathcal F_N(x)=\mathcal F(\rho_N).
\end{equation}

Moreover, $\nabla\varphi\in \operatorname{Tan}_{\rho_N}\mathcal P_2(\R^d)$, so
Lemma~\ref{lem:pushforward-characterization} gives
\begin{align}
\mathcal F(\rho_N^\varepsilon)-\mathcal F(\rho_N)
&=
\varepsilon
\int_{\R^d}
\left\langle
\nabla_{W_2}\mathcal F(\rho_N)(x),\nabla\varphi(x)
\right\rangle
\,\rho_N(dx)
+
o(|\varepsilon|) \notag\\
&=
\frac{\varepsilon}{N}
\sum_{i=1}^N
\left\langle
\nabla_{W_2}\mathcal F(\rho_N)(x_i),\nabla\varphi(x_i)
\right\rangle
+
o(|\varepsilon|) \notag\\
&=
\frac{\varepsilon}{N}
\sum_{i=1}^N
\left\langle
\nabla_{W_2}\mathcal F(\rho_N)(x_i),\psi_i
\right\rangle
+
o(|\varepsilon|).
\label{eq:particle-gradient-wasserstein-expansion}
\end{align}
Combining \eqref{eq:particle-gradient-empirical-identity} and
\eqref{eq:particle-gradient-wasserstein-expansion}, we obtain
\begin{equation}
\label{eq:particle-gradient-comparison}
\mathcal F_N(x+\varepsilon\psi)-\mathcal F_N(x)
=
\mathcal F(\rho_N^\varepsilon)-\mathcal F(\rho_N)
=
\frac{\varepsilon}{N}
\sum_{i=1}^N
\left\langle
\nabla_{W_2}\mathcal F(\rho_N)(x_i),\psi_i
\right\rangle
+
o(|\varepsilon|).
\end{equation}
Comparing \eqref{eq:particle-gradient-comparison} with
\eqref{eq:particle-gradient-euclidean-expansion}, dividing by
$\varepsilon$, and letting $\varepsilon\to0$, we find that
\[
\sum_{i=1}^N
\left\langle
\nabla_{x_i}\mathcal F_N(x)
-
\frac1N\nabla_{W_2}\mathcal F(\rho_N)(x_i),
\psi_i
\right\rangle
=0
\]
for every $\psi\in(\R^d)^N$. Since the vectors $\psi_i$ are arbitrary, this implies
\[
\nabla_{x_i}\mathcal F_N(x)
=
\frac1N\nabla_{W_2}\mathcal F(\rho_N)(x_i),
\qquad
i=1,\ldots,N.
\]
This proves \eqref{eq:particle-gradient}.
\end{proof}

\begin{remark}[On coincident particles]
The pairwise-distinctness assumption is used only to prescribe arbitrary particle
velocities by a single smooth gradient field. The same conclusion extends to
coincident particles by first grouping identical locations, which yields the
identity after summing over each coincidence class, and then using the permutation
symmetry of $\mathcal F_N$ to recover the componentwise formula.
\end{remark}

\subsection{Proof of Proposition~\ref{prop:particle_value}}

We first record an auxiliary empirical-approximation lemma.

\begin{lemma}[Empirical $W_2$ approximation by finitely supported measures]
\label{lem:empirical-w2-approx}
Assume that $\Omega$ is compact. Let $\mu$ be a probability measure on $\Omega$, and let
\[
\mu_N:=\frac1N\sum_{i=1}^N \delta_{X_i},
\]
where $X_1,\ldots,X_N$ are independent random variables with common law $\mu$.
Then there exists $C_0>0$ such that, for all sufficiently large $N$, there exists a deterministic measure
\[
\widehat\mu_N\in\mathcal P_N(\Omega)
\]
satisfying
\[
W_2(\widehat\mu_N,\mu)\le C_0 r_N,
\]
where
\[
r_N:=
\begin{cases}
N^{-1/4}, & d<4,\\[0.3ex]
N^{-1/4}(\log N)^{1/2}, & d=4,\\[0.3ex]
N^{-1/d}, & d>4.
\end{cases}
\]
\end{lemma}

\begin{proof}
Since $\Omega$ is compact, $\mu$ is compactly supported. By Theorem~2 of \citet{fournier2015rate}, specialized to $p=2$, there exist constants $C,c>0$, depending only on $\mu$ and $d$, such that for all $\varepsilon\in(0,1]$,
\[
\mathbb P\!\left(W_2^2(\mu_N,\mu)\ge \varepsilon\right)
\le
\begin{cases}
C\exp(-cN\varepsilon^2), & d<4,\\[0.6ex]
C\exp\!\left(
-cN\left(\dfrac{\varepsilon}{\log(2+1/\varepsilon)}\right)^2
\right), & d=4,\\[1.2ex]
C\exp(-cN\varepsilon^{d/2}), & d>4.
\end{cases}
\]
Set
\[
\varepsilon=C_0^2 r_N^2.
\]
Since $r_N\to0$, we have $\varepsilon\in(0,1]$ for all sufficiently large $N$. Then, for all sufficiently large $N$,
\[
\mathbb P\!\left(W_2^2(\mu_N,\mu)\ge C_0^2 r_N^2\right)
\le
\begin{cases}
C\exp(-cC_0^4), & d<4,\\[0.6ex]
C\exp(-cC_0^4), & d=4,\\[0.6ex]
C\exp(-cC_0^d), & d>4,
\end{cases}
\]
where in the case $d=4$ we used that, as $N\to\infty$,
\[
\log\!\left(2+\frac{1}{C_0^2N^{-1/2}\log N}\right)\asymp \log N.
\]
Choosing $C_0$ sufficiently large, we obtain
\[
\mathbb P\!\left(W_2^2(\mu_N,\mu)\ge C_0^2 r_N^2\right)<1
\]
for all sufficiently large $N$. Therefore
\[
\mathbb P\!\left(W_2(\mu_N,\mu)\le C_0 r_N\right)>0
\]
for all sufficiently large $N$. Since every realization of $\mu_N$ belongs to $\mathcal P_N(\Omega)$, there exists a deterministic measure $\widehat\mu_N\in\mathcal P_N(\Omega)$ such that
\[
W_2(\widehat\mu_N,\mu)\le C_0 r_N.
\]
\end{proof}

\begin{proof}[of Proposition~\ref{prop:particle_value}]
Apply Lemma~\ref{lem:empirical-w2-approx} with $\mu=\rho^\ast$. Then there exists $C_0>0$ such that, for all sufficiently large $N$, one can find
\[
\widehat\rho_N\in\mathcal P_N(\Omega)
\]
satisfying
\[
W_2(\widehat\rho_N,\rho^\ast)\le C_0 r_N.
\]
Because $r_N\to0$, we have $C_0r_N\le\delta$ for all sufficiently large $N$. By the assumed local $W_2$-Lipschitz continuity of $\mathcal F$ at $\rho^\ast$,
\[
\mathcal F(\widehat\rho_N)-\mathcal F(\rho^\ast)
\le
\bigl|\mathcal F(\widehat\rho_N)-\mathcal F(\rho^\ast)\bigr|
\le
L\,W_2(\widehat\rho_N,\rho^\ast)
\le
LC_0r_N.
\]
Since $\rho^\ast$ minimizes $\mathcal F$ over $\mathcal P_2(\Omega)$,
\[
\inf_{\rho\in\mathcal P_2(\Omega)}\mathcal F(\rho)
=
\mathcal F(\rho^\ast)
\le
\inf_{\rho_N\in\mathcal P_N(\Omega)}\mathcal F(\rho_N)
\le
\mathcal F(\widehat\rho_N).
\]
Hence
\[
0
\le
\inf_{\rho_N\in\mathcal P_N(\Omega)}\mathcal F(\rho_N)
-
\inf_{\rho\in\mathcal P_2(\Omega)}\mathcal F(\rho)
\le
\mathcal F(\widehat\rho_N)-\mathcal F(\rho^\ast)
\le
LC_0r_N,
\]
which proves \eqref{eq:particle-value-rate}.
\end{proof}

\section{Proofs for Section~\ref{sec:directional-derivative}}
\subsection{Proof of Proposition~\ref{prop:directional-derivative}}
\begin{proof}
For $1\le i,j\le m$, define $\phi_{ij}(x):=f_i(x)f_j(x)$.
Since $f\in C^1(\Omega;\R^m)$ and $\Omega$ is compact, each $\phi_{ij}$ is
Lipschitz on $\Omega$. Because $(\rho_t)_{|t|<\varepsilon}$ is absolutely
continuous in $\mathcal P_2(\Omega)$, the map
$t\mapsto \int_\Omega \phi_{ij}\,d\rho_t$ is absolutely continuous on
$(-\varepsilon,\varepsilon)$.

\textit{Step 1: Differentiation of the information matrix.}
Testing the continuity equation
$\partial_t\rho_t+\nabla\!\cdot(\rho_tu_t)=0$ against $\eta(t)\phi_{ij}(x)$
with $\eta\in C_c^\infty(-\varepsilon,\varepsilon)$ gives
\[
\frac{d}{dt}\int_\Omega \phi_{ij}\,d\rho_t
=
\int_\Omega \nabla\phi_{ij}(x)\cdot u_t(x)\,\rho_t(dx)
\]
for almost every $t$; see \citet[Ch.~8]{ambrosio_gradient_2008}.
Since $(M_{\rho_t})_{ij}=\int_\Omega \phi_{ij}\,d\rho_t$, the map
$t\mapsto M_{\rho_t}$ is absolutely continuous in $\mathbb S^m$ and
\begin{equation}
\label{eq:dMdt}
\frac{d}{dt}M_{\rho_t}
=
\int_\Omega
\bigl\{
\nabla f(x)\,u_t(x)\,f(x)^\T
+
f(x)\,\bigl(\nabla f(x)\,u_t(x)\bigr)^\T
\bigr\}\,\rho_t(dx)
\end{equation}
for almost every $t$.

\textit{Step 2: Composition with $\lambda_{\min}$.}
The map $\lambda_{\min}:\mathbb S^m\to\R$ is Lipschitz, so
$t\mapsto \mathcal F_E(\rho_t)=\lambda_{\min}(M_{\rho_t})$ is absolutely
continuous. At any $t$ where both $t\mapsto M_{\rho_t}$ and
$t\mapsto \mathcal F_E(\rho_t)$ are differentiable, the variational
representation $\lambda_{\min}(M)=\min_{G\in\Delta}\tr(GM)$ and Danskin's
theorem \citep[Prop.~B.25]{Bertsekas1999} yield
\[
\frac{d}{dt}\mathcal F_E(\rho_t)
=
\min_{G\in \mathcal{G}(M_{\rho_t})}
\tr\!\Bigl(G\,\frac{d}{dt}M_{\rho_t}\Bigr).
\]

\textit{Step 3: Trace identification.}
For any $G\in \mathcal{G}(M_{\rho_t})$, substituting \eqref{eq:dMdt} and applying
cyclicity of the trace gives
\[
\tr\!\Bigl(G\,\frac{d}{dt}M_{\rho_t}\Bigr)
=
\int_\Omega
\langle 2\nabla f(x)^\T Gf(x),\,u_t(x)\rangle
\,\rho_t(dx)
=
\langle a_G,u_t\rangle_{\rho_t}.
\]
Hence
\[
\frac{d}{dt}\mathcal F_E(\rho_t)
=
\min_{G\in \mathcal{G}(M_{\rho_t})}\langle a_G,u_t\rangle_{\rho_t}
=
D\mathcal F_E(\rho_t)[u_t]
\]
for almost every $t\in(-\varepsilon,\varepsilon)$.
\end{proof}
\section{Proofs for Section~\ref{sec:steepest-ascent}}

\subsection{Proof of Proposition~\ref{prop:gap-representation}}
\begin{proof}
The map $G\mapsto\|\Pi_{T_{\Omega}(\cdot)}(a_G(\cdot))\|_{\rho}$ is
continuous on $\mathcal G(M_{\rho})$: this follows from the linearity of
$G\mapsto a_G$, the boundedness of $f$ and $\nabla f$ on $\Omega$, and the
$1$-Lipschitz property of orthogonal projection onto a closed convex cone.
Since $\mathcal G(M_{\rho})$ is compact by Proposition~\ref{prop:G-structure},
the Weierstrass theorem gives the existence of a
minimizer~$G_{\rho}^{\star}$ in~\eqref{eq:Gstar-def}.

It remains to verify~\eqref{eq:steepest-field-explicit}.
By Proposition~\ref{prop:G-structure}, $\mathcal G(M_{\rho})$ is nonempty,
compact, and convex. The set $K_{\rho}$ is convex, closed, and bounded in
$L^2(\rho;\R^d)$; since this space is reflexive, $K_{\rho}$ is weakly compact.
For fixed $G$, the map $u\mapsto \langle a_G,u\rangle_{\rho}$ is linear and
weakly continuous; for fixed $u$, the map
$G\mapsto \langle a_G,u\rangle_{\rho}$ is continuous and affine. Sion's
minimax theorem \citep{Sion1958} therefore gives
\begin{equation}
\label{eq:sion-gap}
\sup_{u\in K_{\rho}}\min_{G\in\mathcal G(M_{\rho})}
\langle a_G,u\rangle_{\rho}
=
\min_{G\in\mathcal G(M_{\rho})}\sup_{u\in K_{\rho}}
\langle a_G,u\rangle_{\rho}.
\end{equation}

Fix $G\in\mathcal G(M_{\rho})$. Since $T_{\Omega}(x)$ is a closed convex
cone, Moreau's decomposition \citep[Ch.~6]{bauschke2011convex} gives
\[
a_G(x)
=
\Pi_{T_{\Omega}(x)}\bigl(a_G(x)\bigr)
+
n_G(x),
\qquad
n_G(x)\in T_{\Omega}(x)^\circ,
\]
with
$\bigl\langle\Pi_{T_{\Omega}(x)}(a_G(x)),\,n_G(x)\bigr\rangle=0$
for $\rho$-almost every $x$.
If $u\in K_{\rho}$, then $u(x)\in T_{\Omega}(x)$ and $\|u\|_{\rho}\le1$, so
$\langle n_G(x),u(x)\rangle\le0$ pointwise $\rho$-almost everywhere. Hence
\[
\langle a_G,u\rangle_{\rho}
\le
\bigl\langle\Pi_{T_{\Omega}(\cdot)}(a_G(\cdot)),\,u\bigr\rangle_{\rho}
\le
\bigl\|\Pi_{T_{\Omega}(\cdot)}(a_G(\cdot))\bigr\|_{\rho}\,\|u\|_{\rho}
\le
\bigl\|\Pi_{T_{\Omega}(\cdot)}(a_G(\cdot))\bigr\|_{\rho}.
\]
If $\|\Pi_{T_{\Omega}(\cdot)}(a_G(\cdot))\|_{\rho}>0$, equality is attained
by $u_G:=\Pi_{T_{\Omega}(\cdot)}(a_G(\cdot))/
\|\Pi_{T_{\Omega}(\cdot)}(a_G(\cdot))\|_{\rho}\in K_{\rho}$, by Moreau
orthogonality; if $\|\Pi_{T_{\Omega}(\cdot)}(a_G(\cdot))\|_{\rho}=0$, both
sides vanish. Therefore
\begin{equation}
\label{eq:sup-u-identity}
\sup_{u\in K_{\rho}}\langle a_G,u\rangle_{\rho}
=
\bigl\|\Pi_{T_{\Omega}(\cdot)}\bigl(a_G(\cdot)\bigr)\bigr\|_{\rho}.
\end{equation}

Substituting~\eqref{eq:sup-u-identity} into the right-hand side
of~\eqref{eq:sion-gap}, the minimum over
$\mathcal G(M_{\rho})$ is attained at~$G_{\rho}^{\star}$, so
\begin{equation}
\label{eq:supmin-value}
\sup_{u\in K_{\rho}}D\mathcal F_E(\rho)[u]
=
\bigl\|\Pi_{T_{\Omega}(\cdot)}\bigl(a_{G_{\rho}^{\star}}(\cdot)\bigr)
\bigr\|_{\rho}.
\end{equation}

If $\|\Pi_{T_{\Omega}(\cdot)}(a_{G_{\rho}^{\star}}(\cdot))\|_{\rho}=0$,
then~\eqref{eq:supmin-value} gives
$\sup_{u\in K_{\rho}}D\mathcal F_E(\rho)[u]=0$, so
$\bar{\nabla}_{W_2}\mathcal F_E(\rho)=0
=\Pi_{T_{\Omega}(\cdot)}(a_{G_{\rho}^{\star}}(\cdot))$
by Definition~\ref{def:steepest-ascent}.

Suppose
$\|\Pi_{T_{\Omega}(\cdot)}(a_{G_{\rho}^{\star}}(\cdot))\|_{\rho}>0$.
The map $u\mapsto\min_{G\in\mathcal G(M_{\rho})}
\langle a_G,u\rangle_{\rho}$ is the infimum of weakly continuous affine
functions, hence weakly upper semicontinuous. Since $K_{\rho}$ is weakly
compact, this map attains its supremum at some $\hat u\in K_{\rho}$. Then
\[
\bigl\|\Pi_{T_{\Omega}(\cdot)}\bigl(a_{G_{\rho}^{\star}}(\cdot)\bigr)
\bigr\|_{\rho}
=
\min_{G\in\mathcal G(M_{\rho})}
\langle a_G,\hat u\rangle_{\rho}
\le
\langle a_{G_{\rho}^{\star}},\hat u\rangle_{\rho}
\le
\bigl\|\Pi_{T_{\Omega}(\cdot)}\bigl(a_{G_{\rho}^{\star}}(\cdot)\bigr)
\bigr\|_{\rho},
\]
where the last inequality is~\eqref{eq:sup-u-identity} applied with
$G=G_{\rho}^{\star}$. All inequalities are therefore equalities, so $\hat u$
attains the supremum in~\eqref{eq:sup-u-identity} for
$G=G_{\rho}^{\star}$. By the Cauchy--Schwarz equality condition in the chain
leading to~\eqref{eq:sup-u-identity}, this maximizer is unique and equals
\[
u_{\rho}^{\star}
=
\frac{\Pi_{T_{\Omega}(\cdot)}(a_{G_{\rho}^{\star}}(\cdot))}
{\|\Pi_{T_{\Omega}(\cdot)}(a_{G_{\rho}^{\star}}(\cdot))\|_{\rho}}.
\]
By Definition~\ref{def:steepest-ascent}
and~\eqref{eq:supmin-value},
\[
\bar{\nabla}_{W_2}\mathcal F_E(\rho)
=
\bigl\|\Pi_{T_{\Omega}(\cdot)}\bigl(a_{G_{\rho}^{\star}}(\cdot)\bigr)
\bigr\|_{\rho}
\cdot u_{\rho}^{\star}
=
\Pi_{T_{\Omega}(\cdot)}\bigl(a_{G_{\rho}^{\star}}(\cdot)\bigr),
\]
which is~\eqref{eq:steepest-field-explicit}.
\end{proof}

\subsection{Proof of Proposition~\ref{prop:gap-stationary}}
\begin{proof}
By Definition~\ref{def:steepest-ascent},
$\bar{\nabla}_{W_2}\mathcal F_E(\rho)=m_{\Omega}(\rho)\,u_{\rho}^{\star}$,
so $\bar{\nabla}_{W_2}\mathcal F_E(\rho)=0$ if and only if
$m_{\Omega}(\rho)=0$. It therefore suffices to show that $\rho$ is stationary
if and only if $m_{\Omega}(\rho)=0$.

Since $0\in K_{\rho}$, we have $m_{\Omega}(\rho)\ge D\mathcal F_E(\rho)[0]=0$.
If $\rho$ is stationary, then $D\mathcal F_E(\rho)[u]\le0$ for every
$u\in K_{\rho}$, so $m_{\Omega}(\rho)\le0$ and hence $m_{\Omega}(\rho)=0$.

Conversely, suppose $m_{\Omega}(\rho)=0$, and let $w\in L^2(\rho;\R^d)$
satisfy $w(x)\in T_{\Omega}(x)$ for $\rho$-almost every $x$.
If $w=0$, then $D\mathcal F_E(\rho)[w]=0\le0$.
If $w\neq0$, set $u:=w/\|w\|_{\rho}\in K_{\rho}$; positive homogeneity
of~\eqref{eq:directional-derivative} in the direction variable gives
\[
D\mathcal F_E(\rho)[w]
=
\|w\|_{\rho}\,D\mathcal F_E(\rho)[u]
\le
\|w\|_{\rho}\,m_{\Omega}(\rho)
=0.
\]
Hence $\rho$ is stationary in the sense of Definition~\ref{def:stationary}.
\end{proof}

\section{Proofs for Section~\ref{sec:flow}}

\subsection{Proof of Proposition~\ref{prop:gap-lsc}}
\begin{proof}
Let $\rho_n\to\rho$ in $\mathcal P_2(\Omega)$. For each $n$, choose
\[
G_n\in\mathcal G(M_{\rho_n})
\]
such that
\[
m_{\Omega}(\rho_n)
=
\left\|
\Pi_{T_{\Omega}(\cdot)}\bigl(a_{G_n}(\cdot)\bigr)
\right\|_{\rho_n}.
\]
Passing to a subsequence if necessary, we may assume that
\[
\liminf_{n\to\infty}m_{\Omega}(\rho_n)^2
=
\lim_{n\to\infty}m_{\Omega}(\rho_n)^2.
\]
Since $\Delta$ is compact, after passing to a further subsequence we may assume
that
\[
G_n\to G\in\Delta.
\]

Because $\Omega$ is compact and $x\mapsto f(x)f(x)^\T$ is continuous on
$\Omega$, the weak convergence $\rho_n\Rightarrow \rho$ implies
\[
M_{\rho_n}
=
\int_{\Omega} f(x)f(x)^\T\,\rho_n(dx)
\to
\int_{\Omega} f(x)f(x)^\T\,\rho(dx)
=
M_{\rho}.
\]
Lemma~\ref{lem:closed-graph-G} therefore yields $G\in \mathcal G(M_{\rho})$.

Applying Lemma~\ref{lem:projected-norm-lsc} with the same sequence
$(\rho_n,G_n)$ gives
\[
\left\|
\Pi_{T_{\Omega}(\cdot)}\bigl(a_G(\cdot)\bigr)
\right\|_{\rho}^2
\le
\liminf_{n\to\infty}
\left\|
\Pi_{T_{\Omega}(\cdot)}\bigl(a_{G_n}(\cdot)\bigr)
\right\|_{\rho_n}^2.
\]
Since $G\in \mathcal G(M_{\rho})$, Proposition~\ref{prop:gap-representation} implies
\[
m_{\Omega}(\rho)^2
\le
\left\|
\Pi_{T_{\Omega}(\cdot)}\bigl(a_G(\cdot)\bigr)
\right\|_{\rho}^2
\le
\liminf_{n\to\infty}m_{\Omega}(\rho_n)^2.
\]
Because all terms are nonnegative, we conclude that
\[
m_{\Omega}(\rho)\le \liminf_{n\to\infty}m_{\Omega}(\rho_n).
\]
\end{proof}

\section{Proofs for Section~\ref{sec:particle-computation}}

\subsection{Proof of Proposition~\ref{prop:particle-convex}}
\begin{proof}
Write $\rho_N=N^{-1}\sum_{i=1}^N\delta_{x_i}$ and $M_N=M_{\rho_N}$, and let
$V_N\in\R^{m\times s_N}$ have orthonormal columns spanning the eigenspace of
$M_N$ associated with $\lambda_{\min}(M_N)$.

By Proposition~\ref{prop:G-structure}, every $G\in \mathcal G(M_N)$ takes the form
$G=V_NSV_N^\T$ for some $S\succeq 0$ with $\tr(S)=1$.

For such $S$, set
\[
a_i(S):=2\nabla f(x_i)^\T V_NSV_N^\T f(x_i),
\qquad
v_i(S):=\Pi_{T_{\Omega}(x_i)}\bigl(a_i(S)\bigr),
\qquad
i=1,\ldots,N.
\]
Since $a_G(x_i)=a_i(S)$ and
$\Pi_{T_\Omega(x_i)}(a_G(x_i))=v_i(S)$ whenever $G=V_NSV_N^\T$, substituting
into Proposition~\ref{prop:gap-representation} gives
\[
m_{\Omega}(\rho_N)
=
\min_{S\succeq 0,\,\tr(S)=1}
\left(
\frac1N\sum_{i=1}^N \|v_i(S)\|^2
\right)^{\!1/2},
\]
which coincides with~\eqref{eq:particle-direction-sdp}.

The feasible set $\{S\in\mathbb S^{s_N}:S\succeq 0,\,\tr(S)=1\}$ is a
compact convex subset of~$\mathbb S^{s_N}$.

To establish convexity of the objective, equip
$\mathcal H_N:=(\R^d)^N$ with the inner product
$\langle u,w\rangle_N:=N^{-1}\sum_{i=1}^N\langle u_i,w_i\rangle$ and induced
norm $\|\cdot\|_N$, and write
\[
\mathcal C_N:=T_{\Omega}(x_1)\times\cdots\times T_{\Omega}(x_N),
\qquad
\mathcal K_N:=\{u\in\mathcal C_N:\|u\|_N\le 1\}.
\]
Set $a(S):=(a_1(S),\ldots,a_N(S))$ and $v(S):=(v_1(S),\ldots,v_N(S))$.
Because projection onto a product of cones acts componentwise,
$v(S)=\Pi_{\mathcal C_N}(a(S))$.

The same Moreau-decomposition argument used in the proof of
Proposition~\ref{prop:gap-representation} yields
\[
\|v(S)\|_N
=
\sup_{u\in\mathcal K_N}\langle a(S),u\rangle_N.
\]
For each fixed $u\in\mathcal K_N$, the map
$S\mapsto\langle a(S),u\rangle_N$ is affine in~$S$, since $a(S)$ depends
affinely on~$S$.  Hence $S\mapsto\|v(S)\|_N$ is convex as the pointwise
supremum of affine functions. Continuity follows from the affinity of
$S\mapsto a(S)$ and the continuity of projection onto a closed convex cone.

Therefore~\eqref{eq:particle-direction-sdp} is a finite-dimensional convex
problem and admits a minimizer by the Weierstrass theorem.

Let $S_N^\star$ be any minimizer of~\eqref{eq:particle-direction-sdp} and set
$G_N^\star:=V_NS_N^\star V_N^\T\in \mathcal G(M_N)$.  Then, for $i=1,\ldots,N$,
\[
v_i(S_N^\star)
=
\Pi_{T_{\Omega}(x_i)}\bigl(a_{G_N^\star}(x_i)\bigr),
\]
so $(v_1(S_N^\star),\ldots,v_N(S_N^\star))$ is the particle realization of the
constrained Wasserstein steepest-ascent field at~$\rho_N$.  The corresponding
particle stationarity measure satisfies
\[
\widehat m_N(x_1,\ldots,x_N)
:=
\left(
\frac1N\sum_{i=1}^N \|v_i(S_N^\star)\|^2
\right)^{\!1/2}
=
m_{\Omega}(\rho_N).
\]
When $\widehat m_N>0$, normalizing by $\widehat m_N$ yields a maximizer
of~\eqref{eq:particle-direction-problem}; when $\widehat m_N=0$, the zero
vector is feasible and stationary.
\end{proof}

\section{Particle swarm optimization implementation}
\label{sec:pso-supp}

For the numerical benchmarks of Section~\ref{sec:experiments}, we use
the particle swarm optimization implementation of
\citet{biswas2014pso}. Each swarm member encodes an equal-weight
$N$-point design, with the same support size $N=100$ as the particle
methods of Algorithm~\ref{alg:particle-ascent} and
Corollary~\ref{cor:particle_lift}, and is represented by the
concatenation $y\in\R^{Nd}$ of its $N$ support points. At iteration
$t$, swarm member $i$ has position $y_i^{(t)}$ and velocity
$V_i^{(t)}$, and maintains its own best past position $p_i^{(t)}$; the
swarm's global best is $g^{(t)}$. The update is
\begin{align*}
V_i^{(t+1)}
&= w_t V_i^{(t)}
 + c_1\, r_{1,i}^{(t)} \circ \bigl(p_i^{(t)}-y_i^{(t)}\bigr)
 + c_2\, r_{2,i}^{(t)} \circ \bigl(g^{(t)}-y_i^{(t)}\bigr),
\\
y_i^{(t+1)}
&= \Pi_{B}\bigl(y_i^{(t)} + V_i^{(t+1)}\bigr),
\end{align*}
where $r_{1,i}^{(t)}$ and $r_{2,i}^{(t)}$ are independent vectors with
independent uniform entries on $[0,1]$, the symbol $\circ$ denotes
componentwise multiplication, and $\Pi_B$ clips each of the $Nd$
coordinates of its argument to the smallest axis-aligned hypercube $B$
containing $\Omega$. For cube design regions, $B=\Omega$; for the unit
ball, $B=[-1,1]^k$, and before the information matrix of a swarm
member is formed, any support point $x$ with $\|x\|>1$ is radially
rescaled to $x/\|x\|$, so that every support point used in the
objective lies in $\Omega$. The cognitive and social coefficients are
$c_1=c_2=2$, and the inertia weight decreases linearly from
$w_{\max}=0.9$ at $t=1$ to $w_{\min}=0.4$ at $t=T$. Each component of
$V_i^{(t+1)}$ is further clamped to $[-V_{\max},V_{\max}]$, with
$V_{\max}$ equal to $0.2$ times the coordinate upper bound of $B$.
Initial positions are drawn independently and uniformly on $B$, and
initial velocities are drawn componentwise uniformly on
$[-V_{\max},V_{\max}]$. The swarm size is $100$, the iteration budget
is $T=1000$, and each experiment is repeated over $100$ independent
runs; the best, mean and worst objective values are reported in
Table~\ref{tab:results}.

\bibliographystyle{plainnat}
\bibliography{references}

\end{document}